\numberwithin{equation}{section}
\newtheorem{Theorem}{Theorem}[section]
\newtheorem{Proposition}[Theorem]{Proposition} 
\newtheorem{Lemma}[Theorem]{Lemma}
\newtheorem{Corollary}[Theorem]{Corollary}
\theoremstyle{remark}
\newtheorem{Remark}[Theorem]{Remark}
\theoremstyle{definition}
\newtheorem{Definition}[Theorem]{Definition}
\newcommand{\SSYT}{\text{SSYT}}
\newcommand{\wt}{\operatorname{wt}}
\newcommand{\g}{\mathfrak{g}}
\theoremstyle{definition}
\newcommand{\yyta}[1]{
\begin{tikzpicture}[scale=0.5]

\draw[line width=0.03cm] (0,0)--(0,1);
\draw[line width=0.03cm] (1,0)--(1,1);

\draw[line width=0.03cm] (0,0)--(1,0);
\draw[line width=0.03cm] (0,1)--(1,1);

\node at (0.5,0.5){#1};

\end{tikzpicture}
}
\newcommand{\yytc}[2]{
\begin{tikzpicture}[scale=0.5]

\draw[line width=0.03cm] (0,0)--(0,4);
\draw[line width=0.03cm] (1,0)--(1,4);

\draw[line width=0.03cm] (0,0)--(1,0);
\draw[line width=0.03cm] (0,1)--(1,1);
\draw[line width=0.03cm] (0,3)--(1,3);
\draw[line width=0.03cm] (0,4)--(1,4);

\node at (0.5,2.25){$\vdots$};
\node at (0.5,0.5){#1};
\node at (0.5,3.5){#2};

\end{tikzpicture}
}
\newcommand{\yytd}[2]{
\begin{tikzpicture}[scale=0.5]

\draw[line width=0.03cm] (0,0)--(0,5);
\draw[line width=0.03cm] (1,0)--(1,5);

\draw[line width=0.03cm] (0,0)--(1,0);
\draw[line width=0.03cm] (0,1)--(1,1);
\draw[line width=0.03cm] (0,4)--(1,4);
\draw[line width=0.03cm] (0,5)--(1,5);

\node at (0.5,2.75){$\vdots$};
\node at (0.5,0.5){#1};
\node at (0.5,4.5){#2};

\end{tikzpicture}
}
\newcommand{\yywa}[1]{
\begin{tikzpicture}[scale=0.5]

\draw[line width=0.03cm] (0,0)--(0,1);
\draw[line width=0.03cm] (1.5,0)--(1.5,1);

\draw[line width=0.03cm] (0,0)--(1.5,0);
\draw[line width=0.03cm] (0,1)--(1.5,1);

\node at (0.75,0.5){#1};

\end{tikzpicture}
}
\newcommand{\yywc}[2]{
\begin{tikzpicture}[scale=0.5]

\draw[line width=0.03cm] (0,0)--(0,4);
\draw[line width=0.03cm] (1.5,0)--(1.5,4);

\draw[line width=0.03cm] (0,0)--(1.5,0);
\draw[line width=0.03cm] (0,1)--(1.5,1);
\draw[line width=0.03cm] (0,3)--(1.5,3);
\draw[line width=0.03cm] (0,4)--(1.5,4);

\node at (0.75,2.25){$\vdots$};
\node at (0.75,0.5){#1};
\node at (0.75,3.5){#2};

\end{tikzpicture}
}
\newcommand{\yywd}[2]{
\begin{tikzpicture}[scale=0.5]

\draw[line width=0.03cm] (0,0)--(0,5);
\draw[line width=0.03cm] (1.5,0)--(1.5,5);

\draw[line width=0.03cm] (0,0)--(1.5,0);
\draw[line width=0.03cm] (0,1)--(1.5,1);
\draw[line width=0.03cm] (0,4)--(1.5,4);
\draw[line width=0.03cm] (0,5)--(1.5,5);

\node at (0.75,2.75){$\vdots$};
\node at (0.75,0.5){#1};
\node at (0.75,4.5){#2};

\end{tikzpicture}
}
\newcommand{\arxiv}[1]{\href{http://arxiv.org/abs/#1}{\tt arXiv:\nolinkurl{#1}}}
\keywords{crystal basis, PBW basis, Young tableaux, multisegment}
\subjclass[2010]{17B37,05E15}
\begin{document}

\title[Young tableaux, multisegments, and PBW bases]{Young tableaux, multisegments, and PBW bases}

\author{John Claxton}
\address{Department of Mathematics and Statistics, Loyola University, Chicago, IL}
\email{johnclaxton56@gmail.com}

\author[Peter Tingley]{Peter Tingley$^*$}
\address{Department of Mathematics and Statistics, Loyola University, Chicago, IL}
\email{ptingley@luc.edu}

\begin{abstract}
The crystals for finite dimensional representations of $\mathfrak{sl}_{n+1}$ can be realized using Young tableaux. The infinity crystal on the other hand is naturally realized using multisegments, and there is a simple description of each embedding $B(\lambda) \hookrightarrow B(\infty)$ in terms of these realizations. The infinity crystal is also parameterized by Lusztig's PBW basis with respect to any reduced expression for $w_0$. We give an explicit description of the unique crystal isomorphism from PBW bases to multisegments in the case where $w_0= s_1 s_2 s_3 \cdots s_n s_1 \cdots s_1 s_2 s_1$, thus obtaining simple formulas for the actions of all crystal operators on this PBW basis. Our proofs use the fact that the twists of the crystal operators by Kashiwara's involution also have simple descriptions in terms of multisegments, and a characterization of $B(\infty)$ due to Kashiwara and Saito. These results are to varying extents known to experts, but we do not think there is a self-contained exposition of this material in the literature, and our proof of the relationship between multisegments and PBW bases seems to be new. 
\end{abstract}

\date{}
\maketitle

\thispagestyle{myheadings}
\font\rms=cmr8 
\font\its=cmti8 
\font\bfs=cmbx8

\markright{\its S\'eminaire Lotharingien de
Combinatoire \bfs 73 \rms (2015), Article~B73c\hfill}
\def\thepage{}

\tableofcontents
 
\section{Introduction}

Kashiwara's crystals $B(\lambda)$ are combinatorial objects corresponding to the highest weight representations of a symmetrizable Kac--Moody algebra. Here we will only consider the case when that algebra is $\mathfrak{sl}_{n+1}$. Then the crystal $B(\lambda)$ can be realized as the set of semi-standard Young tableaux of a fixed shape along with some combinatorial operations.

We also consider the crystal $B(\infty)$ for $U^-(\mathfrak{sl}_{n+1})$, which is a direct limit of the $B(\lambda)$ as $\lambda \rightarrow \infty$. 
There is a combinatorial realization of $B(\infty)$ where the underlying set consists of multisegments (i.e.,  collections of ``segments" $[i,j]$ for various $1 \leq i \leq j \leq n$, allowing multiplicity). Importantly for us, the twists of the crystal operators by Kashiwara's $*$-involution are also easy to describe in this realization. 
Furthermore, 
the weak crystal embeddings $B(\lambda) \hookrightarrow B(\infty)$ are easily understood in terms of Young tableaux and multisegments: Each box corresponds to a segment, and the tableau is sent to the collection of the segment corresponding to each box (see Theorem~\ref{thm:embed}). 

There is another realization of $B(\infty)$ which has as its underlying set Lusztig's PBW monomials. Combinatorially, these are recorded by lists of exponents, called Lusztig data, which consist of an integer for each positive root. The construction depends on a choice of reduced expression for the longest word. The crystal operators are defined algebraically, and are somewhat difficult to work with in general.  

The positive roots for $\mathfrak{sl}_{n+1}$ are naturally in bijection with segments: $\alpha_i+\alpha_{i+1} + \cdots + \alpha_j$ corresponds to $[i,j]$. In this way Lusztig data and multisegments are in bijection. 
In most cases this bijection does not seem to have nice properties but, if we work with the 
 reduced expression
\begin{equation}
\label{eq:rex}
w_0= s_1 s_2 \cdots s_n s_1 s_2 \cdots s_{n-1} \cdots  s_1 s_2 s_1,
\end{equation}
we show that it is a crystal isomorphism (see Theorem~\ref{thm:PBW-MS-isom}). 

\pagenumbering{arabic}
\addtocounter{page}{1}
\markboth{\SMALL JOHN CLAXTON AND PETER TINGLEY}{\SMALL 
YOUNG TABLEAUX, MULTISEGMENTS, AND PBW BASES}

Much of the current work is to some extent understood by experts. The reduced expression \eqref{eq:rex} has been observed to have nice properties many times (see e.g.\ \cite[\S3.4.3]{Kam}, \cite{BBF}, \cite[\S 5]{Littelmann}), and the connection with Young tableaux has been made (see e.g.\ \cite{BZ}, \cite[Prop. 2.3.13]{M}). The map from tableaux to multisegments has been studied in e.g.\ \cite[\S 8]{BZ} and \cite[\S7]{Zelevinsky}, although there it is not discussed in terms of crystals. Various relationships between the infinity crystal and multisegments have also been observed (see e.g.\ \cite{LTV}). Kashiwara's $*$ involution is well known in the context of multisegments:  it is precisely the famous Zelevinsky multisegment duality first introduced in \cite{oZel,oZel2} (see also \cite{Zelevinsky}). Finally, in the affine $\mathfrak{sl}_{n+1}$ case, the embeddings $B(\Lambda) \hookrightarrow B(\infty)$ are described by Jacon and Lecouvey \cite{JL}; our results from \S\ref{ss:emb} can be derived from their results. 
In fact, much of the literature considers the affine case, partly because it is related to the such important topics as the $p$-adic representations theory of $\mathfrak{gl}_n$ and of certain Hecke algebras (see e.g.\ \cite{BZ2, Vaz}), but this can obscure the simpler finite type case.

Ringel's Hall algebra approach to quantum groups \cite{Rin} can also be used to see some of our results: By Gabriel's theorem~\cite{Gab}, in any finite type, 
$U^-(\g)$ can be identified (as a vector space) with the split Grothendieck group of the category of representations of the quiver obtained by choosing an orientation of the Dynkin diagram. Ringel introduced a product in terms of the representation theory of the quiver that strengthens this relationship, and it was shown in \cite{Lus:can-arise,Rin2}  that the natural basis of the Grothendieck group consisting of isomorphism classes of representations coincides with Lusztig's PBW basis for a reduced word adapted to the orientation. Reineke \cite{Rei} gave an explicit description of the crystal operators acting on the PBW basis in terms of representations of quivers. In the $\mathfrak{sl}_{n+1}$ case, isomorphism classes of representation are naturally indexed by multisegments. Choosing the appropriate orientation of the quiver, Reineke's work implies our results from \S\ref{sec:pbwtoms}. 

In any case, we do not know a self-contained exposition of these results. Our methods are considerably more elementary and combinatorial than most of the references discussed above, and some of our proofs are new.

\subsection{Acknowledgements}
We thank Ben Salisbury, Monica Vazirani and Arun Ram for interesting discussions and for comments on an early draft. We also thank Tynan Greene who did some preliminary work with us in summer 2013. Finally, we thank the anonymous referee for providing some important references. Both authors received partial support from the NSF grant DMS-1265555.

\section{Background}

\subsection{The quantized universal enveloping algebra} \label{sec:quea}

$U_q(\mathfrak{sl}_{n+1})$ is the quantized universal enveloping algebra for $\mathfrak{sl}_{n+1}$. It is an algebra over ${\mathbb C}(q)$  generated by $\{ E_i, F_i, K_i^{\pm 1} \}$, for $1 \leq i \leq n$. Details can be found in e.g.\ \cite{CP}. We will mainly work with $U^-_q(\mathfrak{sl}_{n+1})$, the subalgebra generated by the $F_i$. We first fix some notation
\begin{itemize}
\item $W$ is the Weyl group, and $w_0$ is the longest element in $W$. 

\item $P$ is the weight lattice, and $Q \subset P$ is the root lattice. $P^\vee$ and $Q^\vee$ are the co-weight and co-root lattices. 

\item $\{ \alpha_i\}_{i=1, \ldots, n}, \{ \omega_i \}_{i=1, \ldots, n}$ are the simple roots and fundamental weights respectively; $\{ \alpha_i^\vee\}_{i=1, \ldots, n}$ are the simple co-roots. 

\item $\langle \cdot, \cdot \rangle$ is the pairing between the root lattice and the co-root lattice defined by
$$\langle \alpha_i, \alpha_j^\vee \rangle =
\begin{cases}
2 \quad \text{ if } i-j \\
-1 \quad \text{if } |i-j|=1 \\
0 \quad \text{otherwise}. 
\end{cases}
$$

\item $\displaystyle N= \left(
\begin{array}{c}
n \\
2
\end{array}
 \right)$ is the number of positive roots $\mathfrak{sl}_{n+1}$. 

\item $T_i$ is the algebra automorphism of $U_q(\mathfrak{sl}_{n+1})$ introduced by Lusztig (see \cite[37.1.3]{Lusztig}):
\[
\begin{aligned}
&T_i(F_j) =
\begin{cases}
F_j  \quad i \text{ not adjacent to } j \\
F_j F_i - q F_i F_j \quad i \text{ adjacent to } j \\
-K_j^{-1} E_j \quad i=j.  
\end{cases}
\end{aligned}
\]
\[
\begin{aligned}
&T_i(E_j) =
\begin{cases}
E_j  \quad i \text{ not adjacent to } j \\
E_j E_i - q^{-1} E_i E_j \quad i \text{ adjacent to } j \\
-F_j K_j \quad i=j.  
\end{cases}
\end{aligned}
\]
\[
\begin{aligned}
& T_i(K_j) =
\begin{cases}
K_j  \quad i \text{ not adjacent to } j \\
K_i K_j  \quad i \text{ adjacent to } j \\
K_j^{-1} \quad i=j.  
\end{cases}
\end{aligned}
\]
These $T_i$ define an action of the $n$-strand braid group on $U_q(\mathfrak{sl}_{n+1})$, which means
\[
T_i T_j T_i=T_j T_i T_j \quad \text{if} \quad |i-j|=1, \quad \text{ and}  \qquad T_iT_j=T_iT_i \quad \text{otherwise}.
\]
\end{itemize}

There are embeddings of $U_q(\mathfrak{sl}_n) \hookrightarrow U_q(\mathfrak{sl}_{n+1})$ for all $n$, which just takes the generators of $U_q(\mathfrak{sl}_n)$ to the generators with the same names in $U_q(\mathfrak{sl}_{n+1})$, and these are compatible with the braid group actions.

\subsection{Crystals} \label{ss:crystals}

Here we very briefly introduce Kashiwara's crystals. The following definition is essentially from  \cite[\S7.2]{Kashiwara:1995}, although here we do not allow $\varphi_i,  \varepsilon_i$ to take the value $-\infty$.

\begin{Definition}\label{def:crystal}  An {\bf abstract crystal} is a set $B$ along with functions $\wt \colon B \to P$ (where $P$ is the weight
  lattice), and, for each $i \in I$, $\varepsilon_i, \varphi_i \colon B \to {\mathbb Z}$ and $e_i, f_i: B \rightarrow B \sqcup \{ 0 \}$, such that
  \begin{enumerate}
  \item \label{cry1} $\varphi_i(b) = \varepsilon_i(b) + \langle \wt(b), \alpha_i^\vee \rangle$.
  \item \label{cry2}  If $e_i(b) \neq 0$, $e_i$ increases $\varphi_i$ by 1, decreases $\varepsilon_i$ by 1 and increases $\wt$ by $\alpha_i$.
  \item \label{cry3}  $f_i b = b'$ if and only if $e_i b' = b$.
  \end{enumerate}
We often denote an abstract crystal simply by $B$, suppressing the other data.
\end{Definition}

\begin{Definition}
 A {\bf strict morphism} of crystals is a map between two crystals that commutes with $\wt, e_i, f_i, \varepsilon_i,$ and $\varphi_i$ for all $i$. A {\bf weak morphism} is a map which commutes with all $e_i$, but not necessarily the other structure.  An {\bf isomorphism} of crystals is a strict morphism that has an inverse which is also a strict morphism. 
\end{Definition}

\begin{Remark}
In fact, a weak morphism $\phi: B_1 \rightarrow B_2$ must also have good properties with respect to the other structures. For instance, Definition~\ref{def:crystal} \eqref{cry3} implies that, as long as $b \in B_1$ satisfies $f_i(b) \neq 0$, we have $f_i(\phi(b))= \phi(f_i(b))$. It can however happen that $f_i(b)=0$ but $f_i(\phi(b)) \neq 0$. 
\end{Remark}

\begin{Definition}
\label{def:hwcrystal}
A {\bf highest weight} abstract crystal is an abstract crystal 
which has a distinguished element $b_+$ (the highest weight element) such that 
\begin{enumerate}
\item \label{hwcc1} The highest weight element $b_+$ can be reached from any $b \in B$ by applying a sequence of $e_i$  for various $i \in I$. 
\item \label{hwcc2} For all $b \in B$ and all $i \in I$, $\varepsilon_i(b) = \max \{ n : e_i^n(b) \neq 0 \}$.
\end{enumerate}
  \end{Definition}
Notice that, since a highest weight abstract crystal is necessarily connected, it can have no non-trivial automorphisms. 

The crystals we are interested in here are $B(\infty)$, which is related to $U^-(\mathfrak{sl_{n+1}})$, and $B(\lambda)$, which is related to a highest weight representation of $\mathfrak{sl_{n+1}}$. These are all highest weight abstract crystals. We don't need details of how they are defined; instead we just use the characterization of $B(\infty)$ below, and the explicit description of $B(\lambda)$ in terms of Young tableaux from \S\ref{ss:yt}. 

The following notion is very convenient for us. It is a bit non-standard, but can be found in \cite{TW}. 
\begin{Definition} \label{def:bicrystal}
  A {\bf bicrystal} is a set $B$ with two crystal structures
  whose weight functions agree.  We use the convention of
  placing a star superscript on all data for the second crystal
  structure, so $e_i^*,f_i^*,\varphi^*_i$, etc. 
  An element of a bicrystal is called {\bf highest weight} if it is
killed by both $e_i$ and $e_i^*$ for all $i$.  
\end{Definition}

The following is a rewording of \cite[Proposition~3.2.3]{KS97} designed to make the roles of the usual crystal operators and the $*$-crystal operators more symmetric. See \cite{TW} for this exact statement.

\begin{Proposition} \label{prop:comb-characterization}
Fix a bicrystal $B$. Assume $(B, e_i, f_i)$ and $(B, e_i^*, f_i^*)$ are both highest weight abstract crystals with the same highest weight element $b_+$, where the other data is determined by setting $\wt(b_+)=0$. Assume further that, for all $i \neq j \in I$ and all $b \in B$,
\begin{enumerate}

\item \label{ccc0} $f_i(b), f_i^*(b) \neq 0$. 

\item \label{ccc1} $f_i^*f_j(b)= f_jf_i^*( b)$.

\item \label{ccc2} $\varepsilon_i(b)+\varepsilon_i^*(b)+ \langle \wt(b),
  \alpha_i^\vee \rangle\geq0$ 

\item \label{ccc3} If $\varepsilon_i(b)+\varepsilon_i^*(b)+ \langle \wt(b),
  \alpha_i^\vee \rangle=0$ then $f_i(b) = f_i^*(b)$, 

\item \label{ccc4} If $\varepsilon_i(b)+\varepsilon_i^*(b)+ \langle \wt(b),
  \alpha_i^\vee \rangle \geq 1$ then $\varepsilon_i^*(f_i(b))= \varepsilon_i^*(b)$ and  $\varepsilon_i(f^*_i(b))= \varepsilon_i(b)$.

\item \label{ccc5} If $\varepsilon_i(b)+\varepsilon_i^*(b)+ \langle \wt(b),
  \alpha_i^\vee \rangle \geq 2$ then $f_i f_i^*(b) = f_i^*f_i(b)$. 

\end{enumerate}
Then $(B, e_i, f_i) \simeq (B, e_i^*, f_i^*) \simeq B(\infty)$, and
$e_i^*= *e_i *, f_i^*=*f_i*$, where $*$ is Kashiwara's
involution from \cite[2.1.1]{Kas93}. 
\end{Proposition}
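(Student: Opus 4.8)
The plan is to reduce the statement to the characterization of $B(\infty)$ proved by Kashiwara and Saito, exhibiting an explicit isomorphism and letting the six conditions force it to be well defined. The prototype to keep in mind is $B(\infty)$ itself: equipped with its ordinary crystal structure and with the structure obtained by twisting by Kashiwara's involution, $*e_i*$ and $*f_i*$, it is a bicrystal whose two structures share a weight function and satisfy \eqref{ccc0}--\eqref{ccc5} with highest weight element $u_\infty$. So the real content is a rigidity statement: any bicrystal obeying these local axioms must be isomorphic to this prototype, compatibly with \emph{both} structures at once.

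First I would construct a candidate map $\Phi \colon B \to B(\infty)$ by induction on $-\wt$, setting $\Phi(b_+) = u_\infty$ and, for $b \neq b_+$, choosing some $i$ with $e_i(b) \neq 0$ (which exists because $(B, e_i, f_i)$ is a highest weight abstract crystal) and declaring $\Phi(b) = f_i \Phi(e_i b)$. Since $e_i b$ has strictly larger weight the recursion terminates, and by construction $\Phi$ commutes with every $f_i$, hence with every $e_i$. To control the $*$-structure simultaneously I would organize the same recursion using the decomposition of $B$, for each fixed $i$, into the $*$-strings $\{ (f_i^*)^c b' : c \geq 0\}$ indexed by the $*$-$i$-highest weight elements $b'$ (those with $e_i^* b' = 0$). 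Condition \eqref{ccc1} says that the ordinary $f_j$ for $j \neq i$ commute with $f_i^*$, so they preserve the $*$-$i$-string coordinate exactly as in the tensor decomposition of $B(\infty)$, and the recursion then respects $e_i^*, f_i^*$ as well as $e_i, f_i$.

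The hard part is well-definedness: the raising index $i$ is not unique and ordinary crystal operators do not commute, so I must check that different descending paths from $b_+$ assign the same value $\Phi(b)$, and that $\Phi$ is a bijection. This is exactly where the local axioms are spent. Condition \eqref{ccc2} makes the quantity $\varepsilon_i(b)+\varepsilon_i^*(b)+ \langle \wt(b), \alpha_i^\vee \rangle$ a nonnegative ``defect'' measuring how the $i$-string and the $*$-$i$-string overlap; \eqref{ccc3} pins down the boundary case, where defect $0$ forces $f_i = f_i^*$ so the two structures cannot diverge; and \eqref{ccc4}--\eqref{ccc5} supply precisely the facts (that $f_i$ preserves $\varepsilon_i^*$, and that $f_i f_i^* = f_i^* f_i$ once there is room) needed to reconcile any two reduction orders. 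Feeding these into the $*$-string decomposition reproduces Kashiwara's embedding $B(\infty) \hookrightarrow B(\infty) \otimes B_i$ into a product with an elementary crystal $B_i$, which is the mechanism Kashiwara and Saito use to identify $B$ with $B(\infty)$; for this bookkeeping I would invoke their Proposition~3.2.3 in the symmetric form of \cite{TW} rather than reproduce it.

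Finally, the identification $e_i^* = *e_i*$ and $f_i^* = *f_i*$ falls out of having built $\Phi$ as an isomorphism of the \emph{whole} bicrystal. Transporting the second structure of $B$ across $\Phi$ equips $B(\infty)$ with a second crystal structure satisfying \eqref{ccc0}--\eqref{ccc5} alongside its ordinary one; the reconstruction above shows that a second structure obeying the axioms is determined by the first structure together with the axioms, so it must coincide with the canonical twist $*e_i*, *f_i*$, which also satisfies them. Hence under the identification $B \cong B(\infty)$ the abstract operators $e_i^*, f_i^*$ become $*e_i*, *f_i*$. Since the axioms are symmetric in the starred and unstarred operators, applying the same argument to $(B, e_i^*, f_i^*)$ yields the remaining isomorphism $(B, e_i^*, f_i^*) \simeq B(\infty)$.
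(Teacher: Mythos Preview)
The paper does not actually prove this proposition: it is quoted as a known result, attributed to \cite[Proposition~3.2.3]{KS97} in the symmetric formulation of \cite{TW}, with no argument given. Your proposal ultimately defers to the same two references for the technical verification, so in that sense it matches the paper exactly.

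That said, the narrative sketch you wrap around the citation has a circularity worth flagging. Defining $\Phi$ by choosing some $i$ with $e_i(b)\neq 0$ and setting $\Phi(b)=f_i\,\Phi(e_i b)$ does \emph{not} make $\Phi$ commute with every $f_j$ ``by construction'': commutativity with all $f_j$ is precisely the statement that $\Phi$ is independent of the choice of $i$, which you yourself identify as the hard part two paragraphs later. The argument in \cite{KS97} is not organized as an inductive path construction of $\Phi$ at all; rather, one checks that axioms \eqref{ccc0}--\eqref{ccc5} force the map $b \mapsto (e_i^*)^{\varepsilon_i^*(b)}(b) \otimes f_i^{\varepsilon_i^*(b)} b_i$ to be a strict crystal embedding $B \hookrightarrow B \otimes B_i$ for each $i$, and then invokes Kashiwara's uniqueness characterization of $B(\infty)$ via this family of embeddings. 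You do mention this embedding mechanism, so the right ingredients are present; just drop the claim that the inductive recipe already gives a well-defined map before the axioms have been used.
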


The following is immediate from conditions~\eqref{ccc3}, \eqref{ccc4} and \eqref{ccc5} of Proposition~\ref{prop:comb-characterization}. 

\begin{Remark}
The quantities in Proposition~2.6 have also been studied by Lauda and Vazirani \cite{LV} for related reasons. For instance, there $\varepsilon_i(b)+\varepsilon_i^*(b)+ \langle \wt(b),
  \alpha_i^\vee \rangle$ is called $\text{jump}_i(b)$. 
\end{Remark}

\begin{Corollary} \label{cor:KS-diag}
For any $i \in I$ and any $b \in B(\infty)$ the subset of $B(\infty)$ that can be reached from $b$ by applying sequences of the operators $e_i, f_i, e_i^*, f_i^*$ is of the following form, where the solid and dashed arrows show the action of
$f_i$, the dotted and dashed arrows show the action of $f_i^*$, and the width of the diagram at the bottom is $\langle \wt(b_{top}), \alpha_i^\vee \rangle$ for the top vertex $b_{top}$ (in this example the width is $4$).
\vspace{0.15cm}

\begin{equation*} \label{ii*-pic}
\setlength{\unitlength}{0.15cm}
\begin{tikzpicture}[xscale=0.45,yscale=0.45, line width = 0.03cm]

\draw node at (10,5) {$\bullet$};

\draw node at (8,4) {$\bullet$};
\draw node at (12,4) {$\bullet$};
\draw node at (6,3) {$\bullet$};
\draw node at (10,3) {$\bullet$};
\draw node at (14,3) {$\bullet$};

\draw node at (4,2) {$\bullet$};
\draw node at (8,2) {$\bullet$};
\draw node at (12,2) {$\bullet$};
\draw node at (16,2) {$\bullet$};
\draw node at (2,1) {$\bullet$};
\draw node at (6,1) {$\bullet$};
\draw node at (10,1) {$\bullet$};
\draw node at (14,1) {$\bullet$};
\draw node at (18,1) {$\bullet$};

\draw node at (2,-1) {$\bullet$};
\draw node at (6,-1) {$\bullet$};
\draw node at (10,-1) {$\bullet$};
\draw node at (14,-1) {$\bullet$};
\draw node at (18,-1) {$\bullet$};

\draw [->, dotted] (10,5)--(8.2,4.1); 
\draw [->, dotted] (8,4)--(6.2,3.1); 
\draw [->, dotted] (12,4)--(10.2,3.1); 
\draw [->, dotted] (6,3)--(4.2,2.1); 
\draw [->, dotted] (10,3)--(8.2,2.1); 
\draw [->, dotted] (14,3)--(12.2,2.1); 

\draw [->, dotted] (4,2)--(2.2,1.1); 
\draw [->, dotted] (8,2)--(6.2,1.1); 
\draw [->, dotted] (12,2)--(10.2,1.1); 
\draw [->, dotted] (16,2)--(14.2,1.1); 
\draw [->] (10,5)--(11.8,4.1); 

\draw [->] (8,4)--(9.8,3.1); 
\draw [->] (12,4)--(13.8,3.1); 
\draw [->] (6,3)--(7.8,2.1); 
\draw [->] (10,3)--(11.8,2.1); 
\draw [->] (14,3)--(15.8,2.1); 
\draw [->] (4,2)--(5.8,1.1); 
\draw [->] (8,2)--(9.8,1.1); 
\draw [->] (12,2)--(13.8,1.1); 
\draw [->] (16,2)--(17.8,1.1); 

\draw [->, dashed] (2,1) --(2,-0.7);
\draw [->, dashed] (6,1) --(6,-0.7);
\draw [->, dashed] (10,1) --(10,-0.7);
\draw [->, dashed] (14,1) --(14,-0.7);
\draw [->, dashed] (18,1) --(18,-0.7);
\draw [->, dashed] (2,-1) --(2,-2.7);
\draw [->, dashed] (6,-1) --(6,-2.7); 
\draw [->, dashed] (10,-1) --(10,-2.7);
\draw [->, dashed] (14,-1) --(14,-2.7);
\draw [->, dashed] (18,-1) --(18,-2.7);

\end{tikzpicture}
\end{equation*}

\noindent Furthermore, for any element $b$, the quantity $\varepsilon_i(b)+\varepsilon_i^*(b)+ \langle \wt(b),
  \alpha_i^\vee \rangle$ counts how many times one must apply $f_i$ (or equivalently $f_i^*$) to reach a dashed line. \qed
\end{Corollary}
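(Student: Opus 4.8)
The plan is to control the entire picture through the single statistic
\[
j(b) := \varepsilon_i(b) + \varepsilon_i^*(b) + \langle \wt(b), \alpha_i^\vee\rangle,
\]
which condition~\eqref{ccc2} of Proposition~\ref{prop:comb-characterization} guarantees is a non-negative integer (this is the $\mathrm{jump}_i$ of the preceding remark). The key observation is that $j$ drops by exactly $1$ under $f_i$ and under $f_i^*$ as long as it is positive: by the crystal axiom \eqref{cry2} the operator $f_i$ raises $\varepsilon_i$ by $1$ and lowers $\langle \wt, \alpha_i^\vee\rangle$ by $2$, and when $j(b)\geq 1$ condition~\eqref{ccc4} fixes $\varepsilon_i^*$, so $j(f_i b)=j(b)-1$; the starred statement is symmetric. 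On the locus $j=0$, condition~\eqref{ccc3} gives $f_i b = f_i^* b$, and now this one operator raises $\varepsilon_i$ by $1$ (read as $f_i$) and $\varepsilon_i^*$ by $1$ (read as $f_i^*$), exactly cancelling the drop in $\langle \wt, \alpha_i^\vee\rangle$, so $j$ stays $0$. Since $f_i b \neq 0$ always by \eqref{ccc0}, this already proves the final sentence: applying $f_i$ repeatedly lowers $j$ by one each step until it reaches $0$, so the $j=0$ locus, where the arrows become dashed, is first reached after exactly $j(b)$ steps, and the same count holds for $f_i^*$.

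To get the global picture I would first locate the source. Repeatedly applying $e_i$ and $e_i^*$ raises $\wt$ by $\alpha_i$ each time and hence terminates (weights in $B(\infty)$ lie in $-Q^+$), producing an element $b_{\mathrm{top}}$ with $e_i b_{\mathrm{top}} = e_i^* b_{\mathrm{top}} = 0$; by \eqref{hwcc2} for both crystal structures $\varepsilon_i(b_{\mathrm{top}})=\varepsilon_i^*(b_{\mathrm{top}})=0$, so $j(b_{\mathrm{top}})=\langle \wt(b_{\mathrm{top}}), \alpha_i^\vee\rangle =: w$, the asserted width. I would then describe the elements below $b_{\mathrm{top}}$ as $f_i^{a}(f_i^*)^{c}\,b_{\mathrm{top}}$. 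Every commutation needed to see that these are well defined and order-independent takes place at a node $y$ obtained from $b_{\mathrm{top}}$ by applying $(a+c)-2$ operators, hence with $j(y)=w-(a+c)+2\geq 2$ whenever $a+c\leq w$, which is exactly the range in which condition~\eqref{ccc5} supplies $f_if_i^*=f_i^*f_i$. Moreover, by \eqref{ccc4} each $f_i$ applied on the way down raises $\varepsilon_i$ by $1$ while each $f_i^*$ leaves it fixed, so $\varepsilon_i(f_i^{a}(f_i^*)^{c}b_{\mathrm{top}})=a$ and symmetrically $\varepsilon_i^*=c$; in particular the elements with $a+c=d$ are distinct and form the row at depth $d$ (where $j=w-d$), which has $d+1$ elements, with $f_i$ stepping down-right and $f_i^*$ down-left. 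This is the triangular top of the diagram, its bottom edge being the $j=0$ locus of $w+1$ elements.

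Finally, at each element $z$ on the $j=0$ edge, the first paragraph shows $f_i z = f_i^* z$ and that this common operator preserves $j=0$ and is never $0$; iterating it produces the infinite straight-down (dashed) strand hanging from $z$, while $e_i z$ and $e_i^* z$ climb back up the two distinct diagonals of the triangle. Assembling the triangle with its $w+1$ strands, one checks the resulting set is closed under all four operators and connected, hence equals the component reached from $b$, and that $b_{\mathrm{top}}$ is its unique element annihilated by both $e_i$ and $e_i^*$. I expect the only real friction to be the bookkeeping at the junction between the triangle and the strands: one must confirm that the $j=1$ row maps onto the $j=0$ edge exactly as the counts $(\varepsilon_i,\varepsilon_i^*)$ predict, and that the passage from the commuting regime (\eqref{ccc5}, valid only for $j\geq 2$) to the merged regime (\eqref{ccc3}, at $j=0$) is seamless at the tight boundary value $j=2$. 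Everything else is a routine propagation of \eqref{ccc3}, \eqref{ccc4}, and \eqref{ccc5}.
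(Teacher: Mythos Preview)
Your proposal is correct and is precisely the argument the paper has in mind: the paper gives no proof beyond declaring the corollary ``immediate from conditions~\eqref{ccc3}, \eqref{ccc4} and \eqref{ccc5} of Proposition~\ref{prop:comb-characterization},'' and your write-up is exactly the unpacking of that claim via the statistic $j(b)=\varepsilon_i(b)+\varepsilon_i^*(b)+\langle\wt(b),\alpha_i^\vee\rangle$. Your bookkeeping at the triangle/strand junction is fine as stated; the ``friction'' you anticipate does not materialize, since the commutations needed for order-independence of $f_i^a(f_i^*)^c b_{\mathrm{top}}$ with $a+c\le w$ all occur at depth at most $a+c-2$, where $j\ge 2$ and \eqref{ccc5} applies.
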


\subsection{PBW bases and crystal bases} 

Fix a reduced expression $w_0 = s_{i_1} \cdots s_{i_N}$, and let ${\bf i}$ denote the sequence of indices $i_1, \ldots, i_N$. It is well known that this gives an
ordering of the positive roots of $\mathfrak{sl}_{n+1}$: $\beta_1 = \alpha_{i_1}$ and
$\beta_j = s_{i_1} \cdots s_{i_{j-1}} \alpha_{i_j}$ for $2 \le j \le
N$. Define 
\begin{equation} \label{eq:pbwb-def}
F_{\beta_1} = F_{i_1}, \quad F_{\beta_2}= T_{i_1} F_{i_2}, \quad  \ldots , \quad F_{\beta_j} = T_{i_1} \cdots T_{i_{j-1}} F_{i_j}, \ldots
\end{equation}
where the $T_{i_j}$ are the braid group operators defined in \S\ref{sec:quea}.
As shown by Lusztig \cite[Corollary~40.2.2]{Lusztig},
$B_{\bf i} = \{F_{\beta_N}^{(a_n)} \cdots F_{\beta_1}^{(a_1)} \}$ is a basis for $U_q^-(\g)$ (called the PBW basis). Here 
$F_\beta^{(a)}$ means the quantum divided power
\[F_\beta^{(a)}= F_\beta^a/[a]!\]
where $[a]= q^{-a+1}+q^{-a+3}+ \cdots + q^{a-3} +q^{a-1}$ and $[a]!= [a][a-1] \cdots [2]$. 

Let $\mathcal{A}= {\mathbb C}[q]_0$, the algebra of rational function in $q$ that do not have a pole at $q=0$. By \cite[\S42.1]{Lusztig}, 
$\mathcal{L}= \text{span}_\mathcal{A} B_{\bf i}$ does not depend on the reduced expression ${\bf i}$. This $\mathcal{A}$-module is called the crystal lattice. Furthermore, $B_{\bf i}+q \mathcal{L}$ is a basis for $\mathcal{L}/q\mathcal{L}$ which does not depend on ${\bf  i}$. We denote this basis of  $\mathcal{L}/q\mathcal{L}$ by $B$, and call it the crystal basis. 

\begin{Definition} \label{def:index-basis}
For each reduced expression ${\bf i}$ and each collection of non-negative integers $a_1, \ldots, a_{N}$, let $b^{\bf i}_{a_{1}, \ldots, a_{N}}$ denote the element 
$F_{\beta_N}^{(a_n)} \cdots F_{\beta_1}^{(a_1)}+ q \mathcal{L}$
of the crystal basis $B$. 
\end{Definition}

Definition \ref{def:index-basis} gives a parameterization of $B$ by ${\mathbb N}^N$ for each reduced expression ${\bf i}$ of $w_0$. We now define the crystal operators:

\begin{Definition}  \label{def:PBW-crystal-ops}
Fix $i$. Choose ${\bf i}$ so that $i_1=i$ and $ {\bf i'}$ so that $i'_N= n+1-i$ (so that $\beta_N=\alpha_i$). 
For any $a_1, \ldots, a_N$, define
\[
\begin{aligned}
& f_i(b^{\bf i}_{a_{1}, \ldots, a_{N}})=b^{\bf i}_{a_{1}+1, \ldots, a_{N}}, &&
f_i^*(b^{\bf i'}_{a_{1}, \ldots, a_{N}})=b^{\bf i}_{a_{1}, \ldots, a_{N}+1},
 \\
& e_i(b^{\bf i}_{a_{1}, \ldots, a_{N}}) = \begin{cases}
b^{\bf i}_{a_{1}-1, \ldots, a_{N}} \quad \text{ if } a_1 \geq 1 \\
\emptyset \quad \text{ otherwise},
\end{cases} &&
e_i^*(b^{\bf i}_{a_{1}, \ldots, a_{N}}) = \begin{cases}
b^{\bf i'}_{a_{1}, \ldots, a_{N}-1} \quad \text{ if } a_N \geq 1 \\
\emptyset \quad \text{ otherwise},
\end{cases}
 \\
& \varepsilon_i(b^{\bf i}_{a_{1}, \ldots, a_{N}})= a_1,
&&
\varepsilon_i^*(b^{\bf i'}_{a_{1}, \ldots, a_{N}})= a_N.
\end{aligned}
\]

\end{Definition}

\begin{Theorem}[{\cite[Theorem 4.1.2 and its proof]{Sai}}]
$B$ is the crystal basis for $U^-_q(\mathfrak{sl}_{n+1})$, as defined by Kashiwara \cite{Kashiwara:1995}, and the $e_i, e_i^*, f_i, f_i^*$ defined above are the crystal operators. In particular, $B$ along with the operations from Definition~\ref{def:PBW-crystal-ops} is a realization of the crystal $B(\infty)$ from \S\ref{ss:crystals}.
\end{Theorem}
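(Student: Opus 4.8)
The plan is to prove this directly from Kashiwara's algebraic definition of the crystal basis and its operators, showing that the PBW parameterization computes them; this is essentially Saito's line of argument. Proposition~\ref{prop:comb-characterization} is then used at the end to glue the per-word definitions into a single bicrystal on the word-independent set $B$ and to confirm that the starred operators are genuinely the $*$-twists of the unstarred ones.

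First I would recall Kashiwara's description of the operators. Every $x \in U_q^-(\g)$ has a unique decomposition $x = \sum_{k \geq 0} F_i^{(k)} u_k$ with each $u_k$ killed by Kashiwara's $q$-derivation $e_i'$ \cite{Kashiwara:1995}, and the crystal operators are $\tilde f_i\big(\sum_k F_i^{(k)} u_k\big) = \sum_k F_i^{(k+1)} u_k$ and $\tilde e_i\big(\sum_k F_i^{(k)} u_k\big) = \sum_k F_i^{(k-1)} u_k$, with $\varepsilon_i$ reading off the top power of $F_i$ occurring; these descend to $\mathcal{L}/q\mathcal{L}$. There is a right-handed version with the $F_i^{(k)}$ on the other side, and Kashiwara's anti-automorphism $*$ (which fixes each $F_i$) exchanges the two.

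Next I would match these to the PBW data. Take ${\bf i}$ with $i_1 = i$, so that $F_{\beta_1} = F_i$ is the rightmost factor of $F_{\beta_N}^{(a_N)} \cdots F_{\beta_1}^{(a_1)}$. For $j \geq 2$ one has $F_{\beta_j} = T_i\big(T_{i_2} \cdots T_{i_{j-1}} F_{i_j}\big) \in U_q^-(\g) \cap T_i(U_q^-(\g))$, and this intersection is precisely the appropriate one-sided kernel of Kashiwara's derivation. Hence $F_{\beta_N}^{(a_N)} \cdots F_{\beta_2}^{(a_2)}$ lies in that kernel, the PBW monomial is already in one-sided normal form, and the quantum divided powers absorb the scalars $[k]$, so reducing modulo $q\mathcal{L}$ identifies $f_i, e_i, \varepsilon_i$ of Definition~\ref{def:PBW-crystal-ops} with $\tilde f_i, \tilde e_i, \varepsilon_i$. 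For the starred operators I would apply $*$: it relates the PBW basis of ${\bf i'}$ to that of the reversed word (up to the standard $i \mapsto n+1-i$ relabeling), and since $f_i^* = *f_i*$, incrementing the last exponent $a_N$ for a word ${\bf i'}$ with $i_N' = n+1-i$ reduces to the case already treated.

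The subtle point, which I expect to be the main obstacle, is consistency: $b^{\bf i}_{a_1, \ldots, a_N}$ is a priori word-dependent, $f_i$ is read off a word starting with $i$ while $f_i^*$ is read off a different word ${\bf i'}$, so one must check that these glue into a single well-defined bicrystal on $B$. I would resolve this by verifying the hypotheses of Proposition~\ref{prop:comb-characterization}: both $(B, e_i, f_i)$ and $(B, e_i^*, f_i^*)$ are highest weight with highest weight element the class of $1$, and conditions~\eqref{ccc0}--\eqref{ccc5} can be checked on a word realizing the relevant operator, where it acts as a shift of a single coordinate. The conclusion then yields $(B, e_i, f_i) \simeq B(\infty)$ together with $f_i^* = *f_i*$, upgrading the per-word computations to the assertion that these are genuinely Kashiwara's crystal operators and their $*$-twists. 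The technical heart throughout is the identification of the one-sided kernel of $e_i'$ (resp. $e_i''$) with $U_q^-(\g) \cap T_i(U_q^-(\g))$ and the compatibility of the braid operators $T_i$ with the PBW construction.
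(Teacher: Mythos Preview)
The paper does not prove this theorem; it simply cites Saito \cite[Theorem 4.1.2 and its proof]{Sai}. So there is no in-paper argument to compare against, and your sketch is essentially an outline of Saito's proof, which is appropriate.

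Your first two steps are correct and are the real content: the identification of $U_q^-(\g)\cap T_i(U_q^-(\g))$ with the kernel of the relevant Kashiwara derivation, together with the fact that this kernel is a subalgebra, shows that a PBW monomial for a word with $i_1=i$ is already in Kashiwara's normal form, so the PBW shift of $a_1$ agrees with Kashiwara's $\tilde f_i$ modulo $q\mathcal{L}$. That is Saito's argument.

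The gap is in your last paragraph. You propose to verify the hypotheses of Proposition~\ref{prop:comb-characterization} directly on PBW data, saying the conditions ``can be checked on a word realizing the relevant operator, where it acts as a shift of a single coordinate.'' But condition~\eqref{ccc1}, $f_i^* f_j(b)=f_j f_i^*(b)$ for $i\neq j$, involves two operators simultaneously: $f_j$ is read off a word beginning with $j$, while $f_i^*$ is read off a word ending with $n+1-i$. No single reduced expression makes both act as a coordinate shift, so your verification strategy does not apply to the one condition that carries real content. The same objection applies to \eqref{ccc4} and \eqref{ccc5}.

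In fact you do not need Proposition~\ref{prop:comb-characterization} here at all, and invoking it is a detour. Once you have shown that the PBW-defined $f_i,e_i$ coincide with Kashiwara's intrinsically defined $\tilde f_i,\tilde e_i$ on $\mathcal{L}/q\mathcal{L}$, consistency across words is automatic (Kashiwara's operators do not see the word), and $(B,e_i,f_i)\simeq B(\infty)$ is Kashiwara's theorem. For the starred operators, the clean route is the one you begin: compute how $*$ interacts with the braid operators (one has $*\,T_i = T_i^{-1}\,*$ on $U_q^-$), deduce that $*$ carries the PBW basis for ${\bf i}$ to the PBW basis for the reversed word, and conclude directly that incrementing $a_N$ for a word ending in $n+1-i$ is $*\tilde f_i*$. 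That finishes the theorem without ever touching Proposition~\ref{prop:comb-characterization}.
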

 
Fix $i$ and a reduced expression for $w_0$ of the form ${\bf i}=(i, i_2, \cdots, i_N)$. Then ${\bf i'}= (i_2, i_3, \ldots i_N, n+1-i)$ is also a reduced expression. It is clear from the definitions that $T_i^{-1}$ gives the bijection 
$\{ b_{0,a_2, \ldots a_N } \longrightarrow   b'_{a_2, \ldots a_N, 0} \}$
between the subset of those $b \in B(\infty)$ where $\varepsilon_{i}(b)=0$, and those $b \in B(\infty)$ where $\varepsilon_{i}^*(b)=0$. 
Define $\tau:B \rightarrow \{ b \in B: \varepsilon_{i}(b)=0 \}$ by
 $\tau:b^{\bf i}_{a_1,a_2, \ldots a_N } \mapsto b^{\bf i}_{0,a_2, \ldots a_N }$ and let $\sigma_i=T_i^{-1} \circ \tau$.

\begin{Proposition} \label{prop:Saito-formula}
For any $b \in B(\infty)$,
\begin{equation} \label{eq:orf}
\sigma_i(b)= (e_i^*)^{\text{max}}f_i^L b \text{ for any } L \geq \varepsilon_i(b)+\varepsilon_i^*(b) +\langle \wt(b), \alpha_i^\vee\rangle.
\end{equation}
\end{Proposition}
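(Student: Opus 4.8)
The plan is to prove the formula geometrically, using the $i$-string picture of Corollary~\ref{cor:KS-diag}, after two reductions. For $x \in B(\infty)$ set $J(x)=\varepsilon_i(x)+\varepsilon_i^*(x)+\langle\wt(x),\alpha_i^\vee\rangle$, which by the last sentence of Corollary~\ref{cor:KS-diag} is the number of $f_i$'s needed to carry $x$ to a dashed line. First I would check that the right-hand side stabilizes for $L\ge J(b)$. Since $e_if_ib=b$ forces $f_i$ to raise $\varepsilon_i$ by $1$, and $f_i$ lowers $\wt$ by $\alpha_i$ (so $\langle\wt,\alpha_i^\vee\rangle$ drops by $2$), while condition~\eqref{ccc4} fixes $\varepsilon_i^*$ as long as $J\ge1$, the quantity $J$ decreases by $1$ under each $f_i$ until it reaches $0$; there condition~\eqref{ccc3} gives $f_i=f_i^*$. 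Hence for $L\ge J(b)$ we have $f_i^{L+1}b=f_i^*(f_i^Lb)$, and the trailing $(e_i^*)^{\max}$ swallows the extra $f_i^*$. This shows $(e_i^*)^{\max}f_i^Lb$ is independent of $L$ in this range and lies in $\{\varepsilon_i^*=0\}$.

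Next I would reduce to the case $\varepsilon_i(b)=0$. The two sides transform identically under $b\mapsto f_ib$: the left side because $\sigma_i(b^{\mathbf{i}}_{a_1,\dots,a_N})=b^{\mathbf{i}'}_{a_2,\dots,a_N,0}$ does not involve $a_1$, the right side by the previous paragraph together with $J(f_ib)=J(b)-1$. So it suffices to take $b=\tau(b)=b^{\mathbf{i}}_{0,a_2,\dots,a_N}$, where $\sigma_i(b)=T_i^{-1}b=b^{\mathbf{i}'}_{a_2,\dots,a_N,0}$ and $J(b)=\varepsilon_i^*(b)+\langle\wt(b),\alpha_i^\vee\rangle$.

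Now set $b_{\mathrm{top}}=(e_i^*)^{\max}b$, $r=\varepsilon_i^*(b)$, and $w=\langle\wt(b_{\mathrm{top}}),\alpha_i^\vee\rangle$; these are the top, depth, and width of the component $C$ of $b$ in Corollary~\ref{cor:KS-diag}. Because $\varepsilon_i(b)=0$, $b=(f_i^*)^rb_{\mathrm{top}}$ sits on the $f_i^*$-edge and $J(b)=w-r$. Reading the diagram, for $L\ge J(b)$ the element $f_i^Lb$ lies in the dashed region with $\varepsilon_i^*(f_i^Lb)=r+(L-J(b))$, and applying $(e_i^*)^{\max}$ walks it back up to the opposite edge, landing at $(e_i^*)^{\max}f_i^Lb=f_i^{\,w-r}b_{\mathrm{top}}$, the unique element of $C$ with $\varepsilon_i^*=0$ and weight $\wt(b_{\mathrm{top}})-(w-r)\alpha_i$.

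It remains to identify this with $\sigma_i(b)$, and this is where the real work lies. From the PBW definition, $T_i^{-1}$ matches the $\mathbf{i}$-coordinate on $\beta_{k+1}$ with the $\mathbf{i}'$-coordinate on $\beta'_k=s_i\beta_{k+1}$, whence $\wt(\sigma_i(b))=s_i\wt(b)=\wt(b_{\mathrm{top}})-(w-r)\alpha_i$, and clearly $\varepsilon_i^*(\sigma_i(b))=0$. Since the $\varepsilon_i^*=0$ elements of a single component have distinct weights, $\sigma_i(b)=f_i^{\,w-r}b_{\mathrm{top}}$ will follow \emph{provided} $\sigma_i(b)\in C$. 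Establishing this membership is the main obstacle: weight and $\varepsilon_i^*$ cannot by themselves distinguish components, so I must connect the algebraically defined $\sigma_i=T_i^{-1}\circ\tau$ to the combinatorial operators. I would do this by induction on $r$, proving the base case $\sigma_i(b_{\mathrm{top}})=f_i^{\,w}b_{\mathrm{top}}$ and the step $\sigma_i(f_i^*b')=e_i\,\sigma_i(b')$ (for $b'$ on the $f_i^*$-edge) directly from $\sigma_i=T_i^{-1}\circ\tau$ and the way $T_i^{-1}$ intertwines $f_i^*$ with $e_i$. This intertwining is essentially the content of Saito's reflection, and I expect it, rather than the routine stabilization, reduction, and weight bookkeeping above, to be the crux of the argument.
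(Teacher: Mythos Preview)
Your outline parallels the paper's proof in its reductions and its use of the $i$-string diagram, but diverges at precisely the point you flag as the main obstacle. The paper does not attempt to prove directly that $\sigma_i(b)$ lies in the component $C$; instead it invokes Saito's formula \cite[Corollary~3.4.8]{Sai},
\[
\sigma_i(b)=f_i^{\varphi_i^*(b)}(e_i^*)^{\varepsilon_i^*(b)}(b),
\]
valid when $\varepsilon_i(b)=0$, which manifestly lands in $C$. After that citation the paper's argument is pure diagram-chasing: both $f_i^{\varphi_i^*(b)}(e_i^*)^{\varepsilon_i^*(b)}$ and $(e_i^*)^{\max}f_i^L$ are seen to carry $b$ to the same vertex on the right boundary of the picture, and the stabilization for larger $L$ is handled last rather than first. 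Your proposed induction on $r$, with step $\sigma_i(f_i^*b')=e_i\,\sigma_i(b')$, is a reasonable strategy, but establishing that intertwining from $\sigma_i=T_i^{-1}\circ\tau$ alone is not immediate: it requires relating the $\mathbf{i}$- and $\mathbf{i}'$-parameterizations of the crystal nontrivially, and is, as you yourself suspect, essentially the content of Saito's result. So your route is sound in outline but would amount to re-deriving the cited formula; the paper's shortcut is simply to import it.
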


\begin{proof}
It is immediate from the definitions that it suffices to consider the case $\varepsilon_i(b)=0$. Then by \cite[Corollary~3.4.8]{Sai}, 
\begin{equation} \label{eq:sf}
\sigma_i(b)= f_i^{\varphi_i^*(b)}(e_i^*)^{\varepsilon_i^*(b)}(b).
\end{equation}

First assume  $L=  \varepsilon_i(b)+\varepsilon_i^*(b) +\langle \wt(b), \alpha_i^\vee\rangle$, and refer to 
the diagram from Corollary~\ref{cor:KS-diag} showing the part of the crystal reachable from $b$ by applying $e_i,f_i, e_i^*, f_i^*$. By definition, $\varphi_i^*(b)= \langle \wt(b), \alpha_i^\vee \rangle +\varepsilon_i^*(b)$ which is equal to $\langle \wt(b), \alpha_i^\vee \rangle +\varepsilon_i(b)+\varepsilon_i^*(b)$ since $\varepsilon_i(b)=0$. By Corollary~\ref{cor:KS-diag} we see that $\varphi^*_i(b)$ is the number of times one must apply $f_i^*$ to $b$ to reach the dashed line. 
Thus \eqref{eq:sf} takes $b$, applies $e_i^*$ until it gets to the vertex, then applies $f_i$ exactly $\varphi^*_i$ times. On the other hand, \eqref{eq:orf} takes $b$, applies $f_i$ exactly $\varphi^*_i$ times to just reach the dashed lines, then applies $e_i^*$ the maximal number of times, thus reaching the top right boundary of the picture. Tracing this through, they agree. 

If $L> \varepsilon_i(b)+\varepsilon_i^*(b) +\langle \wt(b), \alpha_i^\vee\rangle$ then \eqref{eq:orf} just applies some extra $f_i$ which move along dashed lines, then some extra $e_i^*$, which undoes this, and so the result does not change. 
\end{proof}
The following key lemma will allow us to perform induction on rank.

\begin{Lemma} \label{lem:braid} 
When $w_0=s_1s_2 \cdots s_ns_1 \cdots s_{n-1} \cdots s_1s_2s_1$, the corresponding order on positive roots is
$$\beta_1= \alpha_1, \beta_2= \alpha_1+\alpha_2, \ldots, \beta_{n}= \alpha_1 + \cdots+\alpha_n, \beta_{n+1}= \alpha_2, \ldots, \beta_N= \alpha_n.$$  
Furthermore, 
\begin{align*}
\sigma_n \cdots \sigma_2\sigma_1
& \left(F_{\alpha_1}^{(a_1)} \cdots F_{(\alpha_1+ \dots + \alpha_n)}^{(a_n)}F_{\alpha_2}^{(a_{n+1})} \cdots F_{(\alpha_2+ \dots + \alpha_n)}^{(a_{2n-1})} \cdots F_{\alpha_n}^{(a_N)}\right) \\
&= F_{\alpha_1}^{(a_{n+1})} \cdots F_{(\alpha_1+ \dots + \alpha_{n-1})}^{(a_{2n-1})}F_{\alpha_2}^{(a_{2n})} \cdots F_{\alpha_{n-1}}^{(a_N)}.
\end{align*}
\end{Lemma}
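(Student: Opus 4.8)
The plan is to verify the two assertions separately, using induction on the rank $n$. For the first assertion—that the reduced expression \eqref{eq:rex} induces the stated order on positive roots—I would simply compute the roots $\beta_j = s_{i_1} \cdots s_{i_{j-1}} \alpha_{i_j}$ directly. The word \eqref{eq:rex} factors as a concatenation of blocks $s_1 s_2 \cdots s_{n-k}$ for $k = 0, 1, \ldots, n-1$. Within the first block $s_1 s_2 \cdots s_n$, one checks by a short induction that $s_1 \cdots s_{j-1} \alpha_j = \alpha_1 + \cdots + \alpha_j$, using $s_{j-1}(\alpha_1 + \cdots + \alpha_{j-1}) = \alpha_1 + \cdots + \alpha_{j-2}$ and $s_{j-1}(\alpha_j) = \alpha_{j-1} + \alpha_j$. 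After the first block, the partial product $s_1 s_2 \cdots s_n$ acts as an automorphism, and the key observation is that it sends the $\mathfrak{sl}_n$ positive root $\alpha_2 + \cdots + \alpha_j$ (for the subalgebra on nodes $2, \ldots, n$) to the root produced in the corresponding position of the second block. I expect this to reduce cleanly to the rank-$(n-1)$ statement for the tail word $s_1 \cdots s_{n-1} \cdots s_1 s_2 s_1$, giving the order $\alpha_1, \alpha_1 + \alpha_2, \ldots$ as claimed.

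For the second assertion, the main tool is Proposition~\ref{prop:Saito-formula}, which identifies $\sigma_i = T_i^{-1} \circ \tau$. Since each $\sigma_i$ is built from $T_i^{-1}$ together with the crystal operation $\tau$ that sets $\varepsilon_i = 0$, I would first understand the effect of applying $\sigma_1$ to the PBW monomial written with respect to ${\bf i}$. By the first assertion, $\beta_1 = \alpha_1$, so $\varepsilon_1(b) = a_1$ in this parameterization, and $\tau$ sends the monomial to the one with $a_1$ replaced by $0$. The operator $T_1^{-1}$ then shifts us to the reduced expression ${\bf i'} = (i_2, \ldots, i_N, n)$ and, by the defining relation \eqref{eq:pbwb-def} for the PBW basis, reindexes the root vectors: $T_1^{-1} F_{\beta_j} = F_{s_1 \beta_j}$ for $j \geq 2$, so the root $\alpha_1 + \cdots + \alpha_k$ becomes $\alpha_2 + \cdots + \alpha_k$ and more generally the whole collection shifts down by one Dynkin node. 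This realizes the passage from the $\mathfrak{sl}_{n+1}$ data to $\mathfrak{sl}_n$-type data, compatibly with the embeddings from \S\ref{sec:quea}.

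The heart of the argument is then to iterate: applying $\sigma_2, \ldots, \sigma_n$ in turn should successively strip off one more node, and I would set up an induction where the composite $\sigma_n \cdots \sigma_2 \sigma_1$ telescopes the original monomial (indexed by $a_1, \ldots, a_N$) into the monomial on the right-hand side (which involves only the exponents $a_{n+1}, \ldots, a_N$, i.e.\ the rank-$(n-1)$ data). The crucial point is that after applying $\sigma_1$ the first $n$ exponents $a_1, \ldots, a_n$ get consumed: $a_1$ is zeroed out by $\tau$, and the braid operator relabels $a_2, \ldots, a_n$ into the leading exponents $a_{n+1}$-position vectors of the smaller-rank expression, matching the index shift in the stated right-hand side. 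I would verify this explicitly in the base case $n = 2$ (where $w_0 = s_1 s_2 s_1$ and the formula reduces to a single application of $\sigma_1$) and then push the inductive step through using the $\mathfrak{sl}_n \hookrightarrow \mathfrak{sl}_{n+1}$ compatibility.

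The main obstacle I anticipate is bookkeeping the reindexing of exponents under the braid operators cleanly: one must track exactly how $T_1^{-1}$ permutes and relabels the PBW generators $F_{\beta_j}$ so that the exponent $a_{j}$ in the ${\bf i}$-monomial lands in the correct slot of the ${\bf i'}$-monomial, and confirm that no unwanted $q$-factors or divided-power corrections appear at the level of the crystal basis. Getting the indexing conventions in \eqref{eq:pbwb-def} and Definition~\ref{def:index-basis} to line up across the rank change—and verifying that $\tau$ genuinely acts as $e_1^{\max}$ on the relevant elements so that Proposition~\ref{prop:Saito-formula} applies—is where the care is required; the underlying root-system and crystal facts are otherwise routine.
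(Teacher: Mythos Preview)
Your approach is the same as the paper's: both assertions are direct calculations, and the paper's entire proof is the sentence ``This is a simple calculation using \eqref{eq:pbwb-def}.'' That said, your description of the second calculation is overcomplicated and contains several errors you should clean up before writing it out.

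First, $\sigma_i = T_i^{-1}\circ\tau$ is the \emph{definition} of $\sigma_i$ (given in the paragraph preceding Proposition~\ref{prop:Saito-formula}); Proposition~\ref{prop:Saito-formula} supplies the alternative formula $(e_i^*)^{\max}f_i^L$, which you do not need here. The only fact you need is the one stated just before that definition: $T_i^{-1}$ carries $b^{\bf i}_{0,a_2,\ldots,a_N}$ to $b^{{\bf i}'}_{a_2,\ldots,a_N,0}$ where ${\bf i}'=(i_2,\ldots,i_N,n{+}1{-}i)$. Thus each $\sigma_{i_k}$ simply deletes the leading exponent and rotates the reduced word one step. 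Iterating, $\sigma_n\cdots\sigma_1$ deletes $a_1,\ldots,a_n$ \emph{one at a time} (not all after $\sigma_1$, as you wrote) and leaves the data $(a_{n+1},\ldots,a_N,0,\ldots,0)$ with respect to the word $(i_{n+1},\ldots,i_N,n,n{-}1,\ldots,1)$, whose first $N-n$ letters are precisely the rank-$(n{-}1)$ expression; the first assertion of the lemma then identifies the corresponding roots. No induction on rank is needed for this part, and your base case is misstated: for $n=2$ one applies $\sigma_2\sigma_1$, not a single $\sigma_1$.

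Two further corrections: your claim that $T_1^{-1}$ ``shifts the whole collection down by one Dynkin node'' is false outside the first block (e.g.\ $s_1\alpha_2=\alpha_1+\alpha_2$), and the concern about $q$-factors or divided-power corrections is misplaced. The entire computation lives at the level of Lusztig data $(a_1,\ldots,a_N)$ and the word ${\bf i}$; nothing happens in $U_q^-$ itself, so there is nothing to track beyond the index shift.
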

\begin{proof}
This is a simple calculation using \eqref{eq:pbwb-def}.
\end{proof}

\section{Multisegment and Young tableau realizations} \label{sec-msyt}

\subsection{Multisegment realization of $B(\infty)$} \label{ss:MS}

We now define multisegments and their crystal structure, and prove that they realize $B(\infty)$. 
This is essentially the same as the realization discussed in \cite[\S4.1]{Savage:2006}, although our proof is quite different, and there the term multisegment is not used. We have changed terminology to match \cite{JL}, where they consider the affine case. The term multisegment has also been used as we use it in e.g.\ \cite{Zelevinsky}. Our realization is also very similar to the one constructed in terms of marginally large tableaux in \cite{HL} (see also \cite{LS}), although we note that the $*$ operators are a bit easier to describe using the setup here.

\begin{Definition} \label{def:multisegment}
\begin{enumerate}
\item \upshape  A \textit{segment} is an interval $[i, j]$ with $i,j \in \mathbb{Z}$, $1 \leq i \leq j$.
\item A \textit{multisegment} is a finite set of segments, allowing multiplicity. 
\item Given a multisegment $M$, $M_{i,j}$ is the multiplicity of $[i,j]$ in $M$.
\item $MS_n$ is the set of all multisegments where all segments $[i,j]$ have $j \leq n$.
\item The \textit{height} of a segment, $[i,j]$ is $j-i+1$. 
\item The \textit{size} $|M|$ of a multisegment $M$ is the sum over all segments of their heights. 
\end{enumerate}
\end{Definition}

We will represent a segment $[i,j]$ with a columns of boxes containing the integers $i$ to $j$. For example, $[3,5]$ will be drawn as
\[ \young(5,4,3) \;\; . \]
The size of a multisegment is the total number of boxes if you draw all the segments. 

\begin{Definition} \label{def:strings}
Given $M \in MS_n$, $S_i(M)$ is the string formed as follows (see \S\ref{ss:example} for examples):
\begin{enumerate}
\item[1.] \upshape Order the segments of $M$ from left to right, first in increasing order of height, then by largest to smallest bottom entry. 
\item[2.] Place a ``)" above each $[h,i-1]$ segment, and a ``(" above each $[h,i]$ segment. 
\end{enumerate}
$S_i^*(M)$ is the string formed as follows:
\begin{enumerate}
\item[1.] \upshape Order the segments of $M$ from left to right, first by shortest to tallest, then by \textit {smallest to largest} bottom entry.
\item[2.] Place a ``)" below each $[i+1,j]$ segment, and a ``(" below each $[i,j]$ segment. 
\end{enumerate}
{\upshape In these strings, we say brackets ``(" and ``)" are {\it canceled} if the ``(" is directly to the left of ``)", or if the only brackets between them are canceled. $uc_i(M)$ and $uc_i^*(M)$ are the strings formed from $S_i(M)$ and $S_i^*(M)$ by deleting canceled brackets. }  
\end{Definition} 

For $M \in MS_n$ and $1 \leq i \leq n$, define
\begin{align*}
f_i(M) \hspace{-0.1cm} :=& \begin{cases} (M \backslash \{[h,i-1]\}) \cup \{[h,i]\} & \hspace{-0.25cm} \text{if the right-most ``)" in } uc_i(M) \text{ is above an } [h,i-1] \\
M \cup \{ [i,i] \} &\hspace{-0.25cm}\text{if there are no ``)" in } uc_i(M), \end{cases} \\
e_i(M)  \hspace{-0.1cm} :=& \begin{cases} (M \backslash \{[h,i]\}) \cup \{[h,i-1]\} & \hspace{-0.25cm} \text{if the left-most ``(" in } uc_i(M) \text{ is above an } [h,i],  h \neq i  \\
M\backslash \{[i,i]\} &\hspace{-0.25cm} \text{if the left-most ``(" in } uc_i(M) \text{ is above a } [i,i] \\
0 &\hspace{-0.25cm}\text{if there are no ``(" in } uc_i(M), \end{cases} 
\end{align*}
\begin{align*}
f_i^*(M)  \hspace{-0.1cm} :=& \begin{cases} (M \backslash \{[i+1,j]\}) \cup \{[i,j]\} &\hspace{-0.25cm}\text{if the right-most ``)" in } uc_i^*(M) \text{ is below a } [i+1,j] \\
M \cup \{ [i,i] \} &\hspace{-0.25cm}\text{if there are no ``)" in } uc_i^*(M), \end{cases} \\
e_i^*(M)  \hspace{-0.1cm} :=& \begin{cases} (M \backslash \{[i,j]\}) \cup \{[i+1,j]\} &\hspace{-0.25cm}\text{if the left-most ``(" in } uc_i^*(M) \text{ is below a } [i,j],  i \neq j  \\
M\backslash \{[i,i]\} &\hspace{-0.25cm}\text{if the left-most ``(" in } uc_i^*(M) \text{ is above a } [i,i] \\
0 &\hspace{-0.25cm}\text{if there are no ``(" in } uc_i^*(M). \end{cases} \\
\end{align*}

\begin{Remark}\label{remark:sym}
\upshape There is clearly symmetry between the unstarred and starred operators above. To make this precise, consider the map $\text{Flip}: MS_n \rightarrow MS_n$ which flips every segment over and re-indexes by $1 \leftrightarrow n, 2 \leftrightarrow n-1$, and so on. This sends $S_i(M)$ to $S_{n-i+1}^*(M)$, and so interchanges the action of each $f_i$ with the action of $f^*_{n+1-i}$; in fact, this is the composition of Kashiwara's $*$ involution with the Dynkin diagram automorphism. We will appeal to this symmetry often later on. 
\end{Remark}

For any multisegment $M$, define 
\begin{equation*}
\wt(M) := -\sum\limits_{i=1}^n (\text{\# of } i \text{ boxes})\alpha_i,
\end{equation*}  
and notice that, and all $i$, 
\begin{equation*}
\langle \wt(M), \alpha_i^\vee \rangle = (\text{\# of } i-1 \text{ boxes}) 
\;+\;(\text{\# of } i+1 \text{ boxes}) 
\;-\; 2(\text{\# of } i \text{ boxes}).
\end{equation*}

\begin{Proposition} \label{prop:MS-crystal}
$(MS_n, e_i, f_i, e_i^*, f_i^*)$, with the weight function defined above and additional data determined as in Definition~\ref{def:hwcrystal}, is a highest weight bicrystal with the highest weight element the empty multisegment $\emptyset$. 
\end{Proposition}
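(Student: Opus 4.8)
The plan is to verify the abstract crystal axioms of Definition~\ref{def:crystal} together with the highest weight conditions of Definition~\ref{def:hwcrystal} for both structures, and to observe that the two weight functions literally coincide. By the symmetry of Remark~\ref{remark:sym}, the bijection $\text{Flip}$ intertwines $(MS_n, e_i, f_i)$ with $(MS_n, e_{n+1-i}^*, f_{n+1-i}^*)$, so it suffices to treat the unstarred operators and then transport the structure. Condition \eqref{cry1} holds by fiat once we set $\varepsilon_i(M) = \max\{k : e_i^k(M) \neq 0\}$ (finite since each application of $e_i$ removes a box, so the count is bounded by $|M|$) and define $\varphi_i = \varepsilon_i + \langle \wt(M), \alpha_i^\vee\rangle$. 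Since changing $[h,i-1]$ to $[h,i]$ or adjoining $[i,i]$ adds exactly one box containing $i$, the operator $f_i$ lowers $\wt$ by $\alpha_i$ and $e_i$ raises it by $\alpha_i$; thus the weight clause of \eqref{cry2} is immediate and its $\varphi_i$ clause follows formally from the $\varepsilon_i$ clause via \eqref{cry1}. So the real content is the $\varepsilon_i$ clause of \eqref{cry2} and the bijectivity \eqref{cry3}.

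The crux is to analyze the bracket string $S_i(M)$. For fixed $i$ the only segments carrying a bracket are $[h,i]$ (a ``('') and $[h,i-1]$ (a ``)''), and for each height $\ell \ge 1$ there is exactly one segment of each type, namely $[i-\ell+1,i]$ and $[i-\ell,i-1]$. The prescribed ordering (increasing height, then decreasing bottom entry) places, within each height $\ell$, all copies of the ``('' immediately before all copies of the ``)'', so that
\[
S_i(M) = \underbrace{(\cdots(}_{p_1}\underbrace{)\cdots)}_{q_1}\underbrace{(\cdots(}_{p_2}\underbrace{)\cdots)}_{q_2}\cdots,
\]
where $p_\ell, q_\ell$ are the multiplicities of $[i-\ell+1,i]$ and $[i-\ell,i-1]$. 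The key point is then that $f_i$, which replaces the rightmost uncanceled ``)'' (a height-$\ell^*$ segment $[i-\ell^*,i-1]$) by the height-$(\ell^*+1)$ segment $[i-\ell^*,i]$, has the effect of turning that bracket into a ``('' \emph{in place}: the removed ``)'' is the last close of its height block and the created ``('' is the first open of the next height block, and these positions are adjacent. Dually, $e_i$ turns the leftmost uncanceled ``('' into a ``)'' (or deletes it when the segment is $[i,i]$) in place. Given this, $e_i$ and $f_i$ become the usual signature operators on $S_i(M)$, and the standard facts that cancellation survives an in-place flip and that such a flip changes the number of uncanceled ``('' by exactly one yield the $\varepsilon_i$-clause of \eqref{cry2} and the mutual inverse relation \eqref{cry3}.

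For the highest weight conditions, \eqref{hwcc2} holds by our definition of $\varepsilon_i$, so only \eqref{hwcc1} requires work: every nonempty $M$ must reach $\emptyset$ under the $e_i$. Since each $e_i$ removes one box, the size $|M|$ strictly decreases, guaranteeing termination, and it remains to show that some $e_i(M) \neq 0$ whenever $M \neq \emptyset$. I would argue by contradiction. Writing $E_j$ for the number of segments of $M$ with top entry $j$, the vanishing $e_j(M)=0$ means $S_j(M)$ has no uncanceled ``('', whence the total number of ``('' is at most the total number of ``)'', i.e.\ $E_j \le E_{j-1}$. If every $e_j$ vanished, then choosing $i$ maximal with $E_i > 0$ and descending gives $E_i \le E_{i-1} \le \cdots \le E_1$, so $E_1 > 0$; but a segment with top entry $1$ must be $[1,1]$, which contributes an uncanceled ``('' to $S_1(M)$ and makes $e_1(M) \neq 0$, a contradiction. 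Finally $\emptyset$ is killed by every $e_i$ and $e_i^*$ (all its strings are empty), so it is the common highest weight element, and the two weight functions agree by construction, giving a highest weight bicrystal.

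The genuine technical work is the ``in place'' identification of the second paragraph: checking that the height-reindexing induced by $f_i$ and $e_i$ does not disturb the cancellation pattern, and handling the boundary cases (no uncanceled bracket, and the height-one segments $[i,i]$, where a literal flip would produce the illegal segment $[i,i-1]$ and so is replaced by insertion or deletion). Once one verifies that the rightmost uncanceled ``)'' is always the last close of its height block, and dually for the leftmost ``('', these cases fall into line; the difficulty here is bookkeeping rather than anything conceptual.
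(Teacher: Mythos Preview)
Your proof is correct. For the abstract crystal axioms you follow essentially the same line as the paper: reduce to the unstarred structure via the symmetry of Remark~\ref{remark:sym}, observe that \ref{def:crystal}\eqref{cry1}, \ref{def:crystal}\eqref{cry2}, and \ref{def:hwcrystal}\eqref{hwcc2} hold by construction, and verify \ref{def:crystal}\eqref{cry3} by tracking how $f_i$ converts the rightmost uncanceled ``)'' in $uc_i(M)$ into the leftmost uncanceled ``('' in $uc_i(f_i(M))$. You spell out the height-block structure of $S_i(M)$ more explicitly than the paper does, but the mechanism is the same. (A small remark: with your definition $\varepsilon_i(M)=\max\{k:e_i^k(M)\neq0\}$, the $\varepsilon_i$-clause of \eqref{cry2} is automatic and needs no bracket analysis; only \eqref{cry3} genuinely requires the ``in place'' observation.)

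The real difference is in \ref{def:hwcrystal}\eqref{hwcc1}. You argue by a chain of global inequalities on the counts $E_j$ of segments with top entry $j$, deducing $E_1>0$ and hence the existence of a $[1,1]$ segment. The paper's argument is shorter and more local: since the segment ordering used to form $S_i(M)$ is the same for every $i$, all the strings share a common rightmost segment $[j,k]$; this segment contributes a ``('' at the far right of $S_k(M)$, which is automatically uncanceled, so $e_k(M)\neq 0$. Your argument is valid, but the paper's one-line observation avoids the inequality chain entirely.
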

\begin{proof}
We need to check that both structures satisfy the axioms in Definitions~\ref{def:crystal} and \ref{def:hwcrystal}; we present the proof only for $(MS_n, e_i, f_i)$, since the arguments for the other structure are the same. \ref{def:crystal}\eqref{cry1}
and \ref{def:hwcrystal}\eqref{hwcc2} are true by definition, and \ref{def:crystal}\eqref{cry2} easily follows from the definitions of $\varepsilon_i, \varphi_i,$ and $\wt$. For \ref{def:crystal}\eqref{cry3} consider how applying $f_i$ affects $S_i$: it either changes $[h,i-1] \to [h,i]$ and a ``)" to a ``(" immediately to the right, or adds an $[i,i]$ and a ``(" as far left as possible. Removing canceled brackets, we have: 
\[ uc_i(M): \quad \dots ))){\color{red} )} \quad ((( \dots \]
\[ uc_i(f_i(M)): \quad \dots ))) \quad {\color{green} (}((( \dots \]
where the red ``(" has become the green ``)."
So $e_i$ will act on the green bracket of $uc_i(f_i(M))$, reversing the effect of $f_i$. \par 

Since multisegments are finite, to establish \ref{def:hwcrystal}\eqref{hwcc1} it suffices to show that, for all non-empty $M \in MS_n$, there is some $e_k$ such that $e_k(M) \neq 0$. The segments are ordered the same way in all of the strings $S_i(M)$, so they all have the same right-most segment, call it $[j,k]$. Clearly $uc_k$ has an uncanceled ``(", and hence $e_k(M) \neq 0$. 
\end{proof}

The next few lemmas are needed to prove that this bicrystal satisfies the conditions in Proposition~\ref{prop:comb-characterization}. The first one explains what $\varepsilon_i(M)+\varepsilon_i^*(M)+ \langle \wt(M), \alpha_i^\vee \rangle$ means in terms of brackets.

\begin{Lemma}\label{lem:bracket-count}
Given a multisegment $M \in MS_n$, for $1 \leq i \leq n$ let 
\begin{align*}
ur_i(M)&:= \text{ the number of uncanceled ``)" in } S_i(M) \\
ur_i^*(M)&:= \text{ the number of uncanceled ``)" in } S_i^*(M)
\end{align*}
Then 
\begin{equation} \label{ind-step lem:bracket-count}
\varepsilon_i(M)+\varepsilon_i^*(M)+ \langle \wt(M), \alpha_i^\vee \rangle = ur_i(M)+ur_i^*(M).
\end{equation}
\end{Lemma}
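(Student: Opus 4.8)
The plan is to split \eqref{ind-step lem:bracket-count} into an elementary bracket-counting step and a purely combinatorial count of segment endpoints, and then add the two halves. First I would record the meaning of $\varepsilon_i$ in terms of brackets: since $e_i$ always acts on the left-most uncanceled ``(" of $S_i(M)$ and returns $0$ exactly when no such ``(" remains, the highest-weight axiom \ref{def:hwcrystal}\eqref{hwcc2} gives that $\varepsilon_i(M)$ equals the number of uncanceled ``(" in $S_i(M)$; symmetrically $\varepsilon_i^*(M)$ is the number of uncanceled ``(" in $S_i^*(M)$. The key observation is then that deleting a canceled ``()" pair removes exactly one ``(" and one ``)", so the difference (number of ``)")\,$-$\,(number of ``(") is unchanged by cancellation. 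Writing $R_i,L_i$ for the total numbers of ``)" and ``(" in $S_i(M)$ before cancellation, and $R_i^*,L_i^*$ for those in $S_i^*(M)$, this yields
\[
ur_i(M)-\varepsilon_i(M)=R_i-L_i,\qquad ur_i^*(M)-\varepsilon_i^*(M)=R_i^*-L_i^*.
\]

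By the definitions of the strings, these totals are endpoint counts: $R_i=\sum_h M_{h,i-1}$ and $L_i=\sum_h M_{h,i}$ count the segments whose top entry is $i-1$ and $i$, while $R_i^*=\sum_j M_{i+1,j}$ and $L_i^*=\sum_j M_{i,j}$ count the segments whose bottom entry is $i+1$ and $i$. Adding the two displayed equations, the lemma reduces to the single identity
\[
(R_i-L_i)+(R_i^*-L_i^*)=\langle \wt(M),\alpha_i^\vee\rangle.
\]

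To prove this I would start from $\langle\wt(M),\alpha_i^\vee\rangle=(\#\,(i-1)\text{-boxes})+(\#\,(i+1)\text{-boxes})-2(\#\,i\text{-boxes})$ and compute each segment's contribution. A segment $[a,b]$ contributes to $(\#\,(i-1)\text{-boxes})-(\#\,i\text{-boxes})$ only if it contains exactly one of the values $i-1,i$: those containing both, or neither, cancel, so only boundary segments survive. It contributes $+1$ precisely when $b=i-1$ (top entry $i-1$) and $-1$ precisely when $a=i$ (bottom entry $i$), so this difference equals $R_i-L_i^*$; by the same analysis $(\#\,(i+1)\text{-boxes})-(\#\,i\text{-boxes})=R_i^*-L_i$. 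Summing gives $R_i+R_i^*-L_i-L_i^*$, which is exactly the left-hand side of the displayed identity. (The boundary cases $i=1$ and $i=n$ need no special treatment, since there are no $0$-boxes or $(n+1)$-boxes.)

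The main obstacle is precisely this last bookkeeping: the box-counts range over \emph{all} segments containing a given value, whereas $R_i,L_i,R_i^*,L_i^*$ only see endpoints, so the content is the telescoping in which a segment straddling the relevant boundary contributes nothing and only its endpoints are counted. The Flip symmetry of Remark~\ref{remark:sym} could be used to deduce the starred half of the computation from the unstarred half, shortening the write-up, but the per-segment endpoint analysis itself is unavoidable.
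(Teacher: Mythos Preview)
Your argument is correct and considerably more direct than the paper's. The paper proves the identity by induction on $|M|$: it writes $M=f_j(M')$ for some $j$, then carries out a lengthy case analysis ($|i-j|>1$, $j=i-1$, $j=i+1$, $j=i$, each with sub-cases) to check that both sides of \eqref{ind-step lem:bracket-count} change by the same amount under $f_j$. Your approach instead rewrites the identity as $(ur_i-\varepsilon_i)+(ur_i^*-\varepsilon_i^*)=\langle\wt(M),\alpha_i^\vee\rangle$, observes that each parenthesized difference equals the raw count (total ``)'' minus total ``('') because cancellation removes matched pairs, and then verifies the resulting endpoint identity by a per-segment telescoping of the box counts. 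This is cleaner: it avoids induction, avoids the case split on $j$, and makes transparent why the weight pairing appears. The only point to be slightly careful about is the opening claim that $\varepsilon_i(M)$ equals the number of uncanceled ``('' in $S_i(M)$; this is standard and follows from the analysis of $uc_i$ under $f_i$ in the proof of Proposition~\ref{prop:MS-crystal}, but you might cite that explicitly. The paper's inductive approach, while longer, has the minor advantage of tracking how $ur_i$ and $ur_i^*$ individually respond to each $f_j$, information that resurfaces later (e.g.\ in Lemma~\ref{lem:uri}); your global count gives the sum directly without that finer data.
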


\begin{proof}
We proceed by induction on $|M|$, the base case $M=\emptyset$ being obvious. So, fix $M \neq \emptyset$ and assume the result holds for all smaller multisegments. $MS_n$ is a highest weight crystal by Proposition~\ref{prop:MS-crystal}, so $M= f_j(M')$ for some $j$ and some $M'$ with $|M'|=|M|-1$, and by the inductive hypothesis the result holds for $M'$. We consider several cases. 

\medskip
{\sc Case 1}: $|i-j|>1$. Then  $\langle wt(M), \alpha_i^\vee \rangle=\langle wt(M'), \alpha_i^\vee \rangle$ and $S_i(M)=S_i(M')$, so $\varepsilon_i$ and $ur_i$ are unchanged. If $f_j$ changes $[i,j-1] \rightarrow [i,j]$ or $[i+1,j-1] \rightarrow [i+1,j]$, these segments and their brackets will shift to the right in $S_i^*$. This shift will either leave $ur_i^*$ unchanged, or change it by 1. We will use the notation $\uparrow 1$ or $\downarrow 1$ to record this change, so for example $ur_i \uparrow 1$ means $ur_i(M) = ur_i(M') +1$.
\begin{itemize}
\item If $ur_i^*$ is unchanged, so is $\varepsilon_i^*$.
\item If $ur_i^* \uparrow 1$, then a ``(" changed from canceled to uncanceled, so $\varepsilon_i^* \uparrow 1$.
\item If $ur_i^* \downarrow 1$, then $\varepsilon_i^* \downarrow 1$.
\end{itemize} \par

\smallskip
{\sc Case 2}: $j=i-1$. Then $\langle wt(), \alpha_i^\vee \rangle \uparrow 1$. The string $S_i^*$ is not affected, so $\varepsilon_i^*$ and $ur_i^*$ are unchanged. 
One new ``)" is created in $S_i$, which either increases $ur_i$ by 1 or leaves it unchanged.  
\begin{itemize}
\item If $ur_i$ is unchanged, then a ``(" that was previously uncanceled must be canceled. So $\varepsilon_i \downarrow 1$.
\item If $ur_i \uparrow 1$, then no new ``(" are canceled, so $\varepsilon_i$ is unchanged.
\end{itemize}

\smallskip
{\sc Case 3}: $j=i+1$. Then $\langle wt(), \alpha_i^\vee \rangle \uparrow 1$. 
\begin{itemize}
\item If $f_{i+1}$ adds a $[i+1, i+1]$, this has no affect on $S_i$, so $\varepsilon_i$ and $ur_i$ are unchanged. The fact that $f_{i+1}$ acted this way implies that all ``)" in $S_{i+1}(M')$ are canceled. In particular, $M'_{i+1,i+1} \geq M'_{i,i}$, so adding an $[i+1,i+1]$ must create an \textit{uncanceled} ``)" in $S^*_i$. Hence $\varepsilon_i^*$ is unchanged and $ur_i^* \uparrow 1$. 
\item If $f_{i+1}$ changes $[i,i] \rightarrow [i,i+1]$, this shifts a ``(" to the right in $S_i^*$. As in the case $|i-j|>1$, $ur_i^*$ and  $\varepsilon_i^*$  are affected in the same way. In $S_i$, a ``(" is removed. This can either increase $ur_i$ or leave it fixed.
\begin{itemize}
\item If $ur_i$ is fixed, there is one less uncanceled ``(", so $\varepsilon_i \uparrow 1$. Also, $\langle \wt(),\alpha_i^\vee \rangle \uparrow 1$. 
\item If $ur_i \uparrow 1$, then $\varepsilon_i$ is fixed. So $ur_i \uparrow 1$ and $\langle wt(), \alpha_i^\vee \rangle \uparrow 1$.
\end{itemize}
\item If $f_{i+1}$ changes $[h,i] \rightarrow [h,i+1]$ for $h \neq i$ this has no effect on $S_i^*$, and the same effect on $S_i$ as in the previous case.
\end{itemize} 

\smallskip
{\sc Case 4}: $j=i$. Then $\langle wt(), \alpha_i^\vee \rangle\downarrow 2$ and $\varepsilon_i \uparrow 1$.
\begin{itemize}
\item If $f_i$ adds an $[i,i]$, the $ur_i(M')=ur_i(M)=0$ so $ur_i$ is unchanged. In $S_i^*$, a ``(" is added, this either decreases $ur_i^*$ or leaves it unchanged.
\begin{itemize}
\item If $ur_i^*$ is unchanged then there is a new uncanceled ``(" so $\varepsilon_i^* \uparrow 1$.
\item If $ur_i^* \downarrow 1$, then $\varepsilon_i^*$ is unchanged. 

\end{itemize}
\item If $f_i$ changes $[h,i-1] \rightarrow [h,i]$, this has no affect on $S_i^*$, so $\varepsilon_i^*$ and $ur_i^*$ are unchanged. In $S_i$, an uncanceled ``)" is changed to a ``(", so  $ur_i \downarrow 1$.
\end{itemize}
In all cases the two sides of \eqref{ind-step lem:bracket-count} change by the same amount.
\end{proof}

\begin{Lemma} \label{lem:uri} Let $M \in MS_n$ and $i \neq j$. For each $h$ let $ur_{i;h}(M)$ be the number of ``)" in $uc_i(M)$ corresponding to segments of height $h$.
\begin{enumerate}
\item \label{ddd1} If $f_i^*$ applied to $M$ acts on an $[i+1, j-1]$ segment and $ur_{j;j-i-1}(M) \neq0$, then 
\begin{align}
ur_{j;j-i-1}(f_i^*(&M)) = ur_{j;j-i-1}(M)-1 \label{lem:uri.1} \\
ur_{j;j-i}(f_i^*(&M)) = ur_{j;j-i}(M)+1  \label{lem:uri.2} 
\end{align}
\item \label{ddd2} If $f_i^*$ applied to $M$ adds an $[i,i]$, then
$
ur_{i+1;1}(f_i^*(M)) = ur_{i+1;1}(M) +1 
$.
\item \label{ddd3} In all other cases, $ur_{j;h}(f_i^*(M))=ur_{j;h}(M)$.
\end{enumerate}
\end{Lemma}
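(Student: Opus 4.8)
The plan is to track exactly how a single application of $f_i^*$ perturbs each string $S_j(M)$, and then read off the change in each count $ur_{j;h}$ from a clean description of bracket cancellation. The first step is to record which strings actually move. A segment contributes a bracket to $S_j$ only when its top entry is $j-1$ (a ``)'') or $j$ (a ``(''). In the case where $f_i^*$ replaces $[i+1,j_0]$ by $[i,j_0]$ (so $j_0=j-1$ in the notation of part~(i)), the top entry $j_0$ is unchanged, hence every string except $S_{j_0}$ and $S_{j_0+1}$ is literally unchanged, while in $S_{j_0}$ one ``('' moves up one height class (from height $j_0-i$ to $j_0-i+1$) and in $S_{j_0+1}$ one ``)'' moves up one height class; in the case where $f_i^*$ adds $[i,i]$, only $S_{i+1}$ is relevant among indices $\neq i$, and there a single ``)'' is added at height $1$. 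This already disposes of all indices $j$ for which nothing changes, covering most instances of part~(iii).

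Next I would set up the bookkeeping that makes cancellation computable. The key structural observation is that within each fixed height class of any $S_j$ all openers precede all closers (the opener segments have bottom entry one larger than the closers, and within a height ties are broken largest-bottom-first, likewise for $S_i^*$). Reading the string block-by-block with a stack then gives the one-line formula $ur_{j;h}(M)=\max(0,\,q_h - s_{h-1} - p_h)$, where $p_h,q_h$ count the height-$h$ openers and closers and $s_{h-1}$ is the stack depth entering block $h$, with $s_h=\max(0,\,s_{h-1}+p_h-q_h)$ and $s_0=0$. With this formula the three elementary moves above become short computations: moving the height-$h_0$ closer down by one decreases $ur_{j;h_0}$ by $1$ exactly when $ur_{j;h_0}(M)\neq 0$ (so the stack is empty there) and then raises $ur_{j;h_0+1}$ by $1$ provided $q_{h_0+1}\geq p_{h_0+1}$; moving the opener in $S_{j_0}$ leaves every count fixed provided the stack stays strictly positive across height $j_0-i$; and adding a height-$1$ closer raises $ur_{i+1;1}$ by $1$ provided $q_1\geq p_1$.

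The main obstacle, and the only place the hypothesis on $f_i^*$ enters, is verifying these provisos, which are inequalities among the multiplicities $M_{i,\cdot}$ and $M_{i+1,\cdot}$, whereas the hypothesis is a statement about the separate string $S_i^*$. I would bridge this by applying the same block/stack description to $S_i^*$ itself. Since $f_i^*$ acts on $[i+1,j_0]$, the right-most uncanceled ``)'' of $uc_i^*(M)$ sits at height $h^*=j_0-i$ of $S_i^*$; this forces the stack of $S_i^*$ to empty at height $h^*$ and forces all closers above height $h^*$ to cancel. Emptying at height $h^*$ yields $M_{i+1,j_0} > M_{i,j_0-1}$, which is precisely the proviso guaranteeing the stack of $S_{j_0}$ stays positive across height $j_0-i$; cancellation at height $h^*+1$ yields $M_{i,j_0}\geq M_{i+1,j_0+1}$, which is precisely the proviso needed for the increase in $S_{j_0+1}$. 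The analogous height-$1$ analysis of $S_i^*$ in the added-$[i,i]$ case (all closers canceled) gives $M_{i,i}\geq M_{i+1,i+1}$, the proviso for part~(ii).

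Feeding these inequalities into the formula then yields (i) and (ii) directly. The remaining instances of (iii) split into two kinds: the strings that do not move (trivial), and the string $S_{j_0+1}$ in the subcase $ur_{j;j-i-1}(M)=0$, where the stack entering height $h_0$ is positive, so the downward closer move is simply absorbed and every count is unchanged; no extra hypothesis is needed there. One could alternatively transport the whole statement to the unstarred operators using the $\mathrm{Flip}$ symmetry of Remark~\ref{remark:sym}, but the direct argument above seems cleanest, and I expect the $S_i^*$-to-$S_{j_0},S_{j_0+1}$ translation in the third paragraph to be where the real content lies.
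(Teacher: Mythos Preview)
Your approach is essentially the paper's own: isolate which strings $S_j$ are perturbed by $f_i^*$ (only those with $j=j_0$ or $j=j_0+1$ when $f_i^*$ acts on $[i+1,j_0]$, and only $S_{i+1}$ when it adds $[i,i]$), track the single bracket that moves within each, and pull the needed multiplicity inequalities out of the location of the rightmost uncanceled ``)'' in $S_i^*(M)$. The paper does exactly this case split, just more informally and without your block/stack formula $ur_{j;h}=\max(0,\,q_h-s_{h-1}-p_h)$.

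The one substantive difference is worth noting. For part~(i), equation~\eqref{lem:uri.2}, the paper asserts that the new $[i,j-1]$ is uncanceled in $S_j(f_i^*(M))$ and justifies this only by ``$ur_{j;j-i-1}(M)\neq 0$ implies this cannot happen.'' That hypothesis alone is not sufficient: it guarantees the stack in $S_j$ is empty entering height $j-i$, but says nothing about whether the $[i+1,j]$ openers at height $j-i$ outnumber the $[i,j-1]$ closers. What is actually needed is $M_{i,j-1}\geq M_{i+1,j}$, and you correctly derive this from $S_i^*$: since the rightmost uncanceled ``)'' of $S_i^*(M)$ sits at height $j_0-i$, the stack there is zero and every ``)'' at height $j_0-i+1$ (namely the $[i+1,j]$'s) must be cancelled by the $[i,j-1]$ openers. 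So your third paragraph fills in a step the paper elides. (The paper does use this same style of $S_i^*$-to-multiplicity argument elsewhere, e.g.\ in part~(ii) and in the $[i+1,j]$ sub-case of (iii), so the omission in (i) looks like an oversight rather than a different method.)

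One small wording issue: in the sub-case of (iii) where $ur_{j;j-i-1}(M)=0$, you say ``the stack entering height $h_0$ is positive.'' What you actually use (and what $ur_{j;h_0}=0$ gives) is $s_{h_0-1}+p_{h_0}\geq q_{h_0}$, so that removing one closer raises $s_{h_0}$ by exactly one, which is then absorbed by the added closer at $h_0+1$. The conclusion is right; just tighten the phrasing.
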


\begin{proof}

{\bf \eqref{ddd1}:} $f_i^*$ acts on a segment of height $j-i-1$, and since $ur_{j;j-i-1}(M) \neq0$, this moves an \textit{uncanceled} ``)" in $S_j(M)$ to the right, and \eqref{lem:uri.1} is clear. For \eqref{lem:uri.2}, we need to show that the new $[i,j-1]$ is uncanceled in $S_j(f_i^*(M))$. Since the bracket was originally uncanceled, it can only be canceled by a segment of height $j-i$. But $ur_{j,j-i-1}(M) \neq 0$ implies that this cannot happen, so \eqref{lem:uri.2} holds.

{\bf \eqref{ddd2}:} Since $f_i^*$ adds an $[i,i]$, all ``)"in $S_i^*(M)$ are canceled. 
In particular, since $[i+1,i+1]$ segments can only be canceled by $[i,i]$ segments, $M_{i+1,i+1} \leq M_{i,i}$. 
So in $S_{i+1}(f_i^*(M))$, the new $[i,i]$ corresponds to an uncanceled ``)."

{\bf \eqref{ddd3}:} This breaks up into four cases
\begin{itemize}

\item If $f_i^*$ acts on a segment $[i+1, h]$ for $h \neq i-1,i$ this clearly has no effect on $S_i$ or $uc_i$. 

\item If $f_i^*$ acts on an $[i+1, j-1]$ segment and $ur_{j;j-i-1}(M)=0$, then a \textit{canceled} ``)" is moved to the right in $S_j$, and since it doesn't move past any other ``)", it's not hard to see that it remains canceled, so this has no effect on $ur_j$. \par

\item  If $f_i^*$  acts on an $[i+1, j]$ segment, this shifts a ``(" to the right. 
The only way this can change $ur_j$ is if an $[i,j-1]$ changes from canceled to uncanceled. But the fact that $f_i^*$ acts on an $[i+1,j]$ implies that $M_{i,j-1} < M_{i+1,j}$; so even after the shift, all $[i,j-1]$ segments in $S_j$ are canceled. \par 

\item If $f_i^*$ adds an $[i,i]$, this has no effect on $S_j$ (so certainly it has no effect on $ur_j$), unless $j=i+1$. Clearly $ur_{i;h}$ cannot be affected for $h>1$, and $h=1$ is case \eqref{ddd2}. 
\end{itemize}
\end{proof}

\begin{Lemma}\label{lem:ccc2}
$f_jf_i^*(M)=f_i^*f_j(M)$ when $i\neq j$.
\end{Lemma}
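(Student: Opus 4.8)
The plan is to prove the commutation $f_j f_i^*(M) = f_i^* f_j(M)$ for $i \neq j$ by a case analysis, organized according to how far apart $i$ and $j$ are on the Dynkin diagram. The operators $f_j$ and $f_i^*$ each modify $M$ by a localized change to one segment (either extending a segment's bottom or top by one box, or adding an $[i,i]$ or $[j,j]$ segment), so the heart of the argument is to check that the two modifications do not interfere, i.e.\ that performing one does not disturb the bracketing computation that determines the other. The natural split is: (Case A) $|i-j| > 1$; (Case B) $j = i-1$; (Case C) $j = i+1$; and within these, the sub-cases of which kind of move each operator makes.

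\textbf{First} I would handle the ``easy'' separation case $|i-j| > 1$. Here $f_j$ alters segments relevant to $S_j$ (extending some $[h,j-1]$ to $[h,j]$, or adding $[j,j]$), while $f_i^*$ reads off $S_i^*$ and alters segments relevant to it (extending some $[i+1,\ell]$ to $[i,\ell]$, or adding $[i,i]$). Since $|i-j|>1$, the box-entries $j$ and $i$ differ by at least two, so the segment endpoints each operator touches are disjoint, and neither move changes the bracketing string that governs the other operator. I would argue that the relevant strings $S_j$ and $S_i^*$ are essentially unaffected by the other operator (up to a harmless shift that preserves which bracket is selected, exactly as in Case 1 of Lemma~\ref{lem:bracket-count}), so the two operators act on independent pieces of data and trivially commute.

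\textbf{Next}, the genuinely interactive cases are $j = i \pm 1$, where the operators can compete for the same segments, and this is where I expect the main difficulty. Consider $j = i+1$: then $f_{i+1}$ may extend an $[h,i]$ to $[h,i+1]$ or add an $[i+1,i+1]$, while $f_i^*$ may extend an $[i+1,\ell]$ to $[i,\ell]$ or add an $[i,i]$; both operators can plausibly want to act on, or create, segments with endpoints at $i$ and $i+1$. This is precisely the situation that Lemma~\ref{lem:uri} was built to control: it tracks exactly how $f_i^*$ changes the uncanceled-``$)$'' counts $ur_{j;h}$ that determine which segment $f_j$ selects. My strategy is to use Lemma~\ref{lem:uri} to show that applying $f_i^*$ either leaves the rightmost uncanceled ``$)$'' in $uc_j$ in place (so $f_j$ still selects the same segment, and the two moves commute by inspection) or shifts it in a controlled way that is matched precisely by the effect of $f_j$ on the $S_i^*$ string. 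The subtle sub-case is when both operators would otherwise act on the ``same'' candidate segment (e.g.\ an $[i+1,i]$-type boundary, or when $f_{i+1}$ adds $[i+1,i+1]$ forcing all ``$)$'' in $S_{i+1}$ to be canceled): here I would use parts \eqref{ddd1} and \eqref{ddd2} of Lemma~\ref{lem:uri} to verify that the newly created or shifted uncanceled bracket is exactly the one $f_j$ acts on after $f_i^*$, so the outcome agrees regardless of order.

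\textbf{Finally}, I would reduce the case $j = i-1$ to the case $j = i+1$ (or dispatch it directly) using the Flip symmetry from Remark~\ref{remark:sym}: since Flip interchanges $f_i$ with $f_{n+1-i}^*$ and reverses the roles of the starred and unstarred operators, a commutation relation between an unstarred $f_j$ and a starred $f_i^*$ in one ``adjacency direction'' transports to the other. I expect \textbf{the main obstacle} to be the bookkeeping in Case C when both operators' natural targets overlap near the entries $i$ and $i+1$: one must verify that the order of operations does not change \emph{which} segment is selected, and this requires carefully tracking cancellation in two differently-ordered bracket strings ($S_{i+1}$, read by height then largest-to-smallest bottom entry, versus $S_i^*$, read by height then smallest-to-largest bottom entry) simultaneously. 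Lemma~\ref{lem:uri} is the key technical input that isolates exactly the bracket-count changes needed, so most of the proof will consist of matching its output against the selection rule for $f_j$ in each sub-case.
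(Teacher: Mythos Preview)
Your case split by $|i-j|$ is misleading, and Case~A conceals exactly the difficulty you anticipate only in Case~C. Even when $|i-j|>1$, the operators can interact: $f_i^*$ may change an $[i+1,j-1]$ into an $[i,j-1]$ (this segment exists precisely when $j\geq i+2$), which moves a ``)'' to the right in $S_j$ (since the height grows) and can change which bracket is the rightmost uncanceled one. Your claim that ``the segment endpoints each operator touches are disjoint'' is therefore false, and your parenthetical that the shift is ``harmless \ldots\ exactly as in Case~1 of Lemma~\ref{lem:bracket-count}'' is the very thing requiring proof---that lemma shows the shift can change $ur$ by $\pm 1$, so it is \emph{not} automatically harmless. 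In short, Case~A needs Lemma~\ref{lem:uri} just as much as Case~C does; there is no cheap separation.

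The paper's proof dispenses with the $|i-j|$ split entirely. It notes that commutation can only fail if $f_i^*$ changes the segment $f_j$ selects (or vice versa, reduced to this case by the Flip symmetry of Remark~\ref{remark:sym}), and then reads off from Lemma~\ref{lem:uri} the unique scenario in which this happens: $f_i^*$ sends $[i+1,j-1]\to[i,j-1]$ and $f_j$ then acts on the new $[i,j-1]$ (when $j=i+1$ this is interpreted as $f_i^*$ adding an $[i,i]$). One checks that in this scenario $f_j$ acting on $M$ itself must also change an $[i+1,j-1]$, so both orders produce the single change $[i+1,j-1]\to[i,j]$ and agree. This argument is uniform in $j$; adjacency of $i$ and $j$ plays no special role, so your three-way split creates extra work without isolating the genuine obstruction.
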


\begin{proof}
For this equality to break, $f_i^*$ must change the length of the segment that $f_j$ acts on, or vice-versa. By symmetry (see Remark~\ref{remark:sym}), we can assume we are in the first case. By Lemma~\ref{lem:uri}, there is only one way for this to happen: 
 $f_i^*$ must act on $M$ by changing a $[i+1,j-1]$ to $[i, j-1]$ (if $j=i+1$, this is interpreted as adding a new $[i]$), $f_j$ must act on $f_i^*(M)$ by changing that $[i, j-1]$ to an $[i,j]$, and $f_j$ must act on $M$ by changing a different segment, which looking at the brackets in $f_i^*(M)$ must necessarily be a shorter segment. In fact, $f_j$ must act on M by changing a $[i+1,j-1]$, since if there are no uncanceled brackets corresponding to segments of that length in $S_i(M),$ moving one of them to the right past some ``(" cannot create an uncanceled bracket. From this one can quickly see that $f_jf_i^*(M)$ and $f_i^*f_j(M)$ both change an $[i+1,j-1] $ to $ [i,j]$, so they agree.  
\end{proof}

\begin{Proposition} \label{prop:MS-B(inf)}
$MS_n$ along with the operators $e_i,f_i, e_i^*, f_i^*$ is a realization of $B(\infty)$ as a bicrystal, where the second crystal structure is the twist of the first by Kashiwara's involution. 
\end{Proposition}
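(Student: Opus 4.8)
The plan is to verify the six hypotheses of Proposition~\ref{prop:comb-characterization} for the bicrystal $(MS_n, e_i, f_i, e_i^*, f_i^*)$, from which the conclusion $MS_n \simeq B(\infty)$ (with the second structure the $*$-twist of the first) follows immediately. Proposition~\ref{prop:MS-crystal} already supplies the standing hypotheses of Proposition~\ref{prop:comb-characterization}: both $(MS_n, e_i, f_i)$ and $(MS_n, e_i^*, f_i^*)$ are highest weight abstract crystals with common highest weight element $\emptyset$, their weight functions coincide by construction, and $\wt(\emptyset)=0$. So everything reduces to checking conditions~\eqref{ccc0}--\eqref{ccc5}.

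Three of these are essentially free. Condition~\eqref{ccc0} is immediate from the definitions of $f_i$ and $f_i^*$, each of which either lengthens a segment or adjoins an $[i,i]$, hence is never $0$. Condition~\eqref{ccc1} is exactly Lemma~\ref{lem:ccc2}. For condition~\eqref{ccc2}, Lemma~\ref{lem:bracket-count} identifies $\varepsilon_i(M)+\varepsilon_i^*(M)+\langle\wt(M),\alpha_i^\vee\rangle$ with $ur_i(M)+ur_i^*(M)$, a sum of two nonnegative bracket counts, so it is $\geq 0$. Condition~\eqref{ccc3} then falls out of the same identity: if the quantity vanishes then $ur_i(M)=ur_i^*(M)=0$, so there is no ``)'' in either $uc_i(M)$ or $uc_i^*(M)$, and both $f_i$ and $f_i^*$ act by adjoining an $[i,i]$; thus $f_i(M)=f_i^*(M)=M\cup\{[i,i]\}$.

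For condition~\eqref{ccc4} I would first reduce the two required equalities to one another using the Flip symmetry of Remark~\ref{remark:sym}, which conjugates $f_i^*$ to $f_{n+1-i}$ and $\varepsilon_i$ to $\varepsilon^*_{n+1-i}$; so it suffices to prove $\varepsilon_i(f_i^*(M))=\varepsilon_i(M)$ whenever the quantity is $\geq 1$. Recalling that $\varepsilon_i(M)$ is the number of uncanceled ``('' in $S_i(M)$, I would track the effect of $f_i^*$ on the string $S_i$. When $f_i^*$ lengthens an $[i+1,j]$ to $[i,j]$ with $j\geq i+1$, neither segment contributes a bracket to $S_i$ (both have top $\neq i-1,i$), so $S_i$, and hence $\varepsilon_i$, is unchanged. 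When instead $f_i^*$ adjoins an $[i,i]$ --- which happens exactly when $ur_i^*(M)=0$ --- it inserts a single ``('' into $S_i$ at the height-one position, and one must argue, as in the proof of Lemma~\ref{lem:uri}, that this new ``('' becomes canceled rather than increasing the uncanceled count; the hypothesis that the quantity is $\geq 1$ (equivalently $ur_i(M)\geq 1$ here) guarantees an uncanceled ``)'' to its right to absorb it.

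The crux of the argument is condition~\eqref{ccc5}: the \emph{same-index} commutation $f_i f_i^*(M)=f_i^* f_i(M)$ when the quantity is $\geq 2$. The difficulty is that we cannot yet invoke the $B(\infty)$ picture of Corollary~\ref{cor:KS-diag}, so this must be checked by hand in $MS_n$. I would split into cases according to whether each operator lengthens a segment or adjoins an $[i,i]$. The generic case is $ur_i(M),ur_i^*(M)\geq 1$, where $f_i$ changes some $[h,i-1]\to[h,i]$ and $f_i^*$ changes some $[i+1,j]\to[i,j]$; these target distinct segments (one has top $i-1$, the other bottom $i+1$), and since the $f_i$-segments have bottom $\leq i-1$ they leave $S_i^*$ untouched while the $f_i^*$-segments have top $\geq i+1$ and leave $S_i$ untouched, so the two operators act on each other's input exactly as on $M$, and the two orders visibly agree. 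The remaining case (up to Flip symmetry) is $ur_i(M)=0$, $ur_i^*(M)\geq 2$, where $f_i$ adjoins an $[i,i]$ while $f_i^*$ lengthens an $[i+1,j]$; here both orders should produce $M\cup\{[i,i]\}\setminus\{[i+1,j]\}\cup\{[i,j]\}$, and the only thing to verify is that inserting the $[i,i]$ ``('' into $S_i^*$ does not change which uncanceled ``)'' is rightmost. This is where the bound $ur_i^*(M)\geq 2$ is used: the inserted ``('' sits at the far-left (height-one) end of $S_i^*$ and can cancel at most one ``)'', so with at least two uncanceled ``)'' present the rightmost one --- and hence the segment on which $f_i^*$ acts --- is unchanged. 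Carrying out this bracket bookkeeping carefully is the main technical obstacle; everything else is routine.
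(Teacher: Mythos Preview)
Your proposal is correct and follows essentially the same route as the paper: verify conditions~\eqref{ccc0}--\eqref{ccc5} of Proposition~\ref{prop:comb-characterization}, pulling \eqref{ccc0} from the definitions, \eqref{ccc1} from Lemma~\ref{lem:ccc2}, \eqref{ccc2}--\eqref{ccc3} from Lemma~\ref{lem:bracket-count}, and handling \eqref{ccc4}--\eqref{ccc5} by the same case split on whether $f_i$, $f_i^*$ lengthen a segment or adjoin an $[i,i]$. The only organizational difference is that you invoke the Flip symmetry of Remark~\ref{remark:sym} explicitly to halve the work in \eqref{ccc4} and \eqref{ccc5}, whereas the paper argues both sides directly with a ``without loss of generality''; the underlying bracket bookkeeping (the new $[i,i]$ inserts a ``('' at the far left which is absorbed by an existing uncanceled ``)'') is identical.
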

\begin{proof}
We need to check that $MS_n$ satisfies the conditions in Proposition~\ref{prop:comb-characterization}. 

{\bf (i):} This is clear from the definitions of $f_i$ and $f_i^*$. \par
{\bf (ii):} This is Lemma~\ref{lem:ccc2} \par
{\bf (iii):} This is clear from Lemma~\ref{lem:bracket-count}. \par
{\bf (iv):} By Lemma~\ref{lem:bracket-count}, if $\varepsilon_i(M)+\varepsilon_i^*(M)+ \langle \wt(M), \alpha_i^\vee \rangle=0$, there are no uncanceled ``)" in either $S_i(M)$ or $S_i(M)$, so both $f_i$ and $f_i^*$ add an $[i,i]$. \par
{\bf (v):} By Lemma~\ref{lem:bracket-count}, if $\varepsilon_i(M)+\varepsilon_i^*(M)+ \langle \wt(M), \alpha_i^\vee \rangle \geq 1$, there is an uncanceled ``)" in either $S_i(M)$ or $S_i^*(M)$. Without loss of generality, assume it is in $S_i(M)$. So $f_i$ applied to $M$ must change $[h,i-1] \rightarrow [h,i]$ for some $h \leq i-1$. This has no affect on $S_i^*$, so $\varepsilon_i^*(f_i(M)) =  \varepsilon_i^*(M)$. If there is also an uncanceled ``)" in $S_i^*(M)$, the same argument shows that $\varepsilon_i(f_i^*(M)) =  \varepsilon_i^*(M)$. If there are no uncanceled ``)" in $S_i^*$, then $f_i^*$ adds an $[i,i]$. This creates a ``(" all the way to the left in $S_i(f_i^*(M))$. Since we assumed there was some uncanceled ``)" in $S_i(M)$, it must cancel the new ``(", and so $\varepsilon_i(f_i^*(M)) =  \varepsilon_i(M)$. \par
{\bf (vi):} By Lemma~\ref{lem:bracket-count}, if $\varepsilon_i(M)+\varepsilon_i^*(M)+ \langle \wt(M),
  \alpha_i^\vee \rangle \geq 2$, there are at least 2 uncanceled ``)" between $S_i(M)$ and $S_i^*(M)$. If there is at least one in each string, $f_i$ and $f_i^*$ add to the tops and bottoms of segments respectively, so neither operator affects the other string, from which it is clear that $f_if_i^*(M)=f_i^*f_i(M)$. 
  
 Otherwise, one string has no uncanceled ``)" and the other has at least 2. Assume without loss of generality that $ur_i(M)=0$ and $ur_i^*(M) \geq 2$. As in (v), $S_i(f_i^*(M))=S_i(M)$, so $f_i$ acts  on segments of the same length in $M$ and $f_i^*(M)$. Since $ur_i(M)=0$ we see that $f_i(M) = M \cup \{ [i,i] \}$, so this creates a ``(" all the way to the left in $S_i^*(f_i(M))$, which cancels a ``)". But since there are at least 2 uncanceled ``)" in $S_i^*(M)$, the right-most one is still uncanceled in $S_i^*(f_i(M))$. Therefore $f_i^*$ acts  on segments of the same length in $M$ and $f_i(M)$. It follows that $f_i f_i^*(M)= f_i^* f_i (M)$.  
\end{proof}

\subsection{Young tableau realization of $B(\lambda)$} \label{ss:yt}
Recall that a partition $\lambda = (\lambda_1, \lambda_2, \cdots, \lambda_k)$ is a non-increasing sequence of positive integers. The size of $\lambda$ is $|\lambda|= \lambda_1+ \cdots + \lambda_k$. 
To each partition $\lambda$ and $n \geq k-1$, we associate a weight for $\mathfrak{sl}_{n+1}$ by 
$$\lambda \rightarrow (\lambda_1-\lambda_2) \omega_1+ \cdots + (\lambda_{k-1}-\lambda_{k} )\omega_{k-1}+ \lambda_k \omega_k,$$ 
where in the case $n=k-1$ we drop the last term. As usual we will often simply write $\lambda$ to mean this associated weight when the meaning is clear from context.

We associate to a partition its {\bf Young diagram}, which consists of the $|\lambda|$ boxes, arranged as a row of length $\lambda_1$ above a row of length $\lambda_2$, etc. 
A semi-standard Young tableau of shape $\lambda$ for $\mathfrak{sl}_{n+1}$ is a filling of the Young diagram of $\lambda$ with the numbers $\{0,\ldots,n\}$, which is weakly increasing along rows and strictly increasing down columns. See Figure~\ref{fig:yt}. Denote the set of all such tableaux of a fixed shape by $\SSYT_n(\lambda)$. 

Define operators $f_i$ on $\SSYT_n(\lambda)$ for $1 \leq i \leq n$ as follows: 
 $f_i$ will change a $i-1$ to a $i$, or else send the tableau to $0$. To determine which $i-1$ changes, place a  ")" above each column that contains a $i-1$ but no $i$, and a ``(" above each column that contains a $i$ but no $i-1$. Cancel the brackets as usual. $f_i$ changes the $i-1$ corresponding to the right-most uncanceled ``)" to a $i$, or, if there is no uncanceled ``)," then $f_i(b)=0$. As with multisegments, $e_i$ is calculated with the same string of brackets, and changes the left-most uncanceled $i$ to an $i-1$. 

\begin{figure}
\[
\begin{tikzpicture}[scale=0.5]
\node [anchor=south west] at (0,0){$\young(000112234,1122334,234,4)$};
\node at (1.8,4.8){)};
\node at (2.65,4.8){(};
\node at (4.6,4.8){)}; 
\node at (5.5,4.8){(}; 
\node at (6.4,4.8){(}; 
\end{tikzpicture}
\]
\caption{\label{fig:yt} A Young tableau $b$ of shape $(9,7,3,1)$. The string of brackets $S_2(b)$ is shown. $f_2(b)$ is obtained by changing the right-most ``1" on the second row to a ``2".   }
\end{figure}

The following is well known. It can be found in a slightly different form in e.g.\ \cite[\S 5]{Kashiwara:1995}.
\begin{Theorem} 
$\SSYT_n(\lambda)$ with the operators defined above is a realization of the $\mathfrak{sl}_{n+1}$-crystal $B(\lambda)$.
\qed
\end{Theorem}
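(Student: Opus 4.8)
The plan is to realize $\SSYT_n(\lambda)$ as a single connected component inside a tensor power of the standard crystal, to check that the column-bracketing operators defined above are the restrictions of the tensor-product operators, and then to identify that component with $B(\lambda)$ using the known crystal structure of tensor products. Let $\mathbb{B}=B(\omega_1)$ be the crystal of the vector representation: its elements are the single boxes $0,1,\dots,n$, with $f_i$ sending the box $i-1$ to the box $i$, $e_i$ the reverse, and $\wt(\text{box }j)=\omega_{j+1}-\omega_j$ (setting $\omega_0=\omega_{n+1}=0$), so that a column of height $\ell$ has weight $\omega_\ell$ and $u_\lambda$ below will have weight $\lambda$. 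By Kashiwara's theory $\mathbb{B}^{\otimes N}$ is again a crystal, with $e_i,f_i$ computed by the signature (bracket-cancellation) rule. I would set $N=|\lambda|$ and send each $T\in\SSYT_n(\lambda)$ to its reading word $w(T)\in\mathbb{B}^{\otimes N}$, reading the columns from left to right, each column from bottom to top; this is injective.

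The first task is to show that $w$ intertwines the operators of this section with the tensor operators. Because columns are strictly increasing, a column contains at most one $i-1$ and at most one $i$; when both occur the chosen reading order places their two brackets adjacently so that they cancel, so each column contributes exactly the bracket prescribed here --- ``)'' if it has an $i-1$ but no $i$, ``('' if an $i$ but no $i-1$, and nothing otherwise --- and the global bracket string of $w(T)$ agrees with the column-level string. For this to be a statement about tableaux one must also know the image is closed, i.e.\ that $f_i(w(T))$ and $e_i(w(T))$ are reading words of semistandard tableaux (or $0$). This closure --- that changing the selected $i-1$ into $i$ (resp.\ $i$ into $i-1$) preserves weak increase along rows and strict increase down columns --- is the technical heart of the argument: the column condition is immediate from the choice of column, while the row condition is precisely what the cancellation rule is designed to guarantee. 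I expect this semistandardness verification to be the main obstacle.

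Finally I would identify the component. The superstandard tableau $u_\lambda$, with every box of row $j$ equal to $j-1$, has $\wt(u_\lambda)=\lambda$ and is killed by every $e_i$ (each of its columns reads $0,1,2,\dots$, so any column containing $i$ also contains $i-1$, leaving no uncancelled ``(''). Connectivity then follows by showing that for any $T\ne u_\lambda$ some $e_i(T)\ne0$, so that iterating raising operators brings $T$ to $u_\lambda$ (the weight strictly increases toward $\lambda$, so the process terminates). Together with closure this shows $\{w(T):T\in\SSYT_n(\lambda)\}$ is exactly the connected component of $\mathbb{B}^{\otimes N}$ containing the highest weight element $w(u_\lambda)$ of dominant weight $\lambda$. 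Since $V(\omega_1)^{\otimes N}$ is completely reducible, every connected component of $\mathbb{B}^{\otimes N}$ is isomorphic to some $B(\mu)$, and a component whose highest weight element has weight $\lambda$ is isomorphic to $B(\lambda)$; as $\SSYT_n(\lambda)$ also satisfies the seminormality condition \eqref{hwcc2} of Definition~\ref{def:hwcrystal} (here $\varepsilon_i$ is the number of uncancelled ``(''-brackets), the desired isomorphism $\SSYT_n(\lambda)\simeq B(\lambda)$ follows. Connectivity and this concluding identification are routine once tensor-product crystals are available, so the substance of the proof really lies in the closure step of the previous paragraph.
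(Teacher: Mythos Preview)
The paper does not actually prove this theorem: it is stated as well known, with a \qed\ appended to the statement and a reference to \cite[\S 5]{Kashiwara:1995}. So there is no ``paper's own proof'' to compare against; the authors are simply importing a standard fact.

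Your proposal is the standard proof, and it is essentially correct. Embedding $\SSYT_n(\lambda)$ into $\mathbb{B}^{\otimes N}$ via a reading word, checking that the column-level bracket rule is the restriction of the tensor-product signature rule, verifying closure under $e_i,f_i$, and then identifying the component generated by the superstandard tableau with $B(\lambda)$ is exactly how this is done in, e.g., Kashiwara--Nakashima and in the textbook \cite{Hong&Kang:2000}. You have correctly identified the closure step (that the modified filling remains semistandard) as the only place where real work happens. One small point to be careful about: make sure your reading order and your tensor-product convention are consistent with the paper's bracket conventions (here ``('' is to the \emph{left} of a cancelling ``)'', and $f_i$ acts on the rightmost uncancelled ``)''); your bottom-to-top, left-to-right column reading does produce adjacent cancelling pairs for columns containing both $i-1$ and $i$, but the choice of tensor convention determining which uncancelled bracket $f_i$ hits must match. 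This is a bookkeeping issue, not a mathematical one.
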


\subsection{Embeddings $B(\lambda) \hookrightarrow B(\infty)$} \label{ss:emb}

The following map is essentially the map from\break tableaux to Kostant partitions from \cite{LS}, although presented a bit differently (and, as a warning, the term ``segment" is used differently there). The same map is also studied in \cite[\S 7]{Zelevinsky}, although the fact that it is a crystal morphism is not explicitly discussed there.

\begin{Definition} \label{def:YT-MS}
For a Young tableau $b \in \SSYT_n(\lambda)$ define the corresponding multisegment $M_b$ to be the collection containing the segment $[i,j]$ for each $j \geq i$ in the $i^{th}$ row of $b$. See Figure~\ref{fig:embed}. 
\end{Definition}

\newsavebox{\btab}
\savebox{\btab}{
\begin{tikzpicture}[scale=0.5]
\node [anchor=south west] at (0,0){$\young(0123,123,23)$};
\node [anchor=south west] at (3.05,3){(};
\node [anchor=south west] at (.35,3){)};
\end{tikzpicture}}

\newsavebox{\ebtab}
\savebox{\ebtab}{
\begin{tikzpicture}[scale=0.5]
\node [anchor=south west] at (0,0){$\young(0122,123,23)$};
\end{tikzpicture}}

\newsavebox{\bseg}
\savebox{\bseg}{
\begin{tikzpicture}[scale=0.5]
\node [anchor=south west] at (1,0){$\young(3)$};
\node [anchor=south west] at (1.25,3){(};
\node [anchor=south west] at (2.5,0){$\young(2)$};
\node [anchor=south west] at (2.75,3){)};
\node [anchor=south west] at (4,0){$\young(1)$};
\node [anchor=south west] at (5.5,0){$\young(3,2)$};
\node [anchor=south west] at (5.55,3){(};
\node [anchor=south west] at (7,0){$\young(2,1)$};
\node [anchor=south west] at (7.25,3){)};
\node [anchor=south west] at (8.5,0){$\young(3,2,1)$};
\node [anchor=south west] at (8.75,3){(};
\end{tikzpicture}
}

\newsavebox{\ebseg}
\savebox{\ebseg}{
\begin{tikzpicture}[scale=0.5]
\node [anchor=south west] at (1,0){$\young(3)$};
\node [anchor=south west] at (2.5,0){$\young(2)$};
\node [anchor=south west] at (4,0){$\young(1)$};
\node [anchor=south west] at (5.5,0){$\young(3,2)$};
\node [anchor=south west] at (7,0){$\young(2,1)$};
\node [anchor=south west] at (8.5,0){$\young(2,1)$};
\end{tikzpicture}
}

\begin{figure}
\[
\begin{tikzpicture}[scale=.45, every node/.style={scale=.75}]
\node at (0,5) (1) {\usebox{\btab}};
\node at (8,5) (2) {\usebox{\bseg}};
\node at (0,0) (3) {\usebox{\ebtab}};
\node at (8,0) (4) {\usebox{\ebseg}};

\path[->, thick, every node/.style={font=\sffamily\normalsize}]
    (1) edge node [above] {$\psi$} (2)
    (3) edge node [above] {$\psi$} (4)
    (1) edge node [left] {$e_3$} (3)
    (2) edge node [right] {$e_3$} (4);
\end{tikzpicture}
\]
\caption{\label{fig:embed} Moving from Young tableaux to multisegments and applying $e_3$.}
\end{figure}

\begin{Theorem} \label{thm:embed}
The map $\psi:b \mapsto M_b$ from $\SSYT_n(\lambda)$ to $MS_n$ is a weak embedding of crystals.
\end{Theorem}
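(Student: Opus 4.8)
The plan is to prove that $\psi$ is injective and that it commutes with every $e_i$; together these say $\psi$ is a weak embedding. Injectivity is the easy half. Given the fixed shape $\lambda$, the multisegment $M_b$ determines $b$: in a semistandard tableau every entry in row $r$ is at least $r-1$, so the only value occurring in row $r$ that contributes no segment is $r-1$. Hence for each $r$ the multiplicities $M_{r,v}$ with $v\geq r$ record the number of entries equal to $v$ in row $r$, and the remaining $\lambda_r-\sum_{v\geq r}M_{r,v}$ entries must all equal $r-1$. Thus the content of each row, and so (rows being weakly increasing) the tableau itself, is recovered from $M_b$ together with $\lambda$, so $\psi$ is injective.

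For the main point, fix $i$ and note that both operators $e_i$ act by changing a single box from $i$ to $i-1$: on the tableau this is literal, while on $M_b$ it replaces the segment $[h,i]$ coming from that box by $[h,i-1]$, or deletes an $[i,i]$ when $h=i$ (which matches the box passing from the recorded value $i$ in row $i$ to the unrecorded value $i-1$). Since $M_b$ depends only on the multiset of (row, value) pairs, I only need the two rules to select a box in the \emph{same row}, and to return $0$ simultaneously. Each rule reads off the left-most uncanceled ``('' of a bracket string --- the column string for tableaux, and $S_i(M_b)$ for multisegments --- so the task reduces to comparing the uncanceled ``('' of these two strings.

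The key structural input is that in a semistandard tableau the cells of any fixed value form a horizontal strip whose row index is weakly decreasing as the column increases. Writing $\alpha_r$ and $\beta_r$ for the numbers of $(i-1)$'s and $i$'s in row $r$, I will use this to show that the tableau column string, read left to right, is grouped by row in decreasing order, and that within the block for row $r$ the $(i-1)$-cells (each a ``)'') precede the $i$-cells (each a ``(''); the row bound $\geq r-1$ also forces every $i$ in row $i+1$ to lie directly below an $i-1$, so such cells form dominoes and contribute no bracket on either side. On the multisegment side, the only bracket-bearing segments are $[r,i]$ (a ``('', height $i-r+1$) and $[r,i-1]$ (a ``)'', height $i-r$); collecting them by increasing height and then decreasing bottom entry gives $S_i(M_b)=(^{\beta_i}\,)^{\alpha_{i-1}}\,(^{\beta_{i-1}}\,)^{\alpha_{i-2}}\cdots(^{\beta_1}$, which regroups into the same decreasing-row blocks $)^{\alpha_r}(^{\beta_r}$, except that row $i$ contributes only its ``('' block since its $(i-1)$'s are not recorded as segments.

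Comparing the two strings is the heart of the argument and the main obstacle: they use different global orderings (columns versus segment heights) and genuinely contain different brackets, so the point is to show the discrepancy is harmless for the uncanceled ``(''. Two things happen. First, the $(i-1)$'s of row $i$ produce leading ``)'' in the tableau string that are absent from $S_i(M_b)$; being at the far left they cancel nothing and so cannot change which ``('' survives. Second, each vertical domino ($i-1$ directly above $i$) contributes, in $S_i(M_b)$, one extra ``('' at the right end of a row-$r$ block and one extra ``)'' at the left end of the adjacent row-$(r-1)$ block; these two are adjacent and cancel among themselves, and after their removal the per-row bracket counts become exactly those of the tableau string. Because deleting an adjacent ``()'' pair never changes the reduced form of a bracket string, the two strings have the same uncanceled ``('' in the same rows, so their left-most surviving ``('' lies in the same row and they run out of ``('' together. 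This gives $\psi\circ e_i=e_i\circ\psi$ and completes the proof.
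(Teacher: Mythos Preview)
Your proof is correct and follows essentially the same approach as the paper: both compare the tableau bracket string with $S_i(M_b)$ and identify the same two discrepancies (the leading ``)'' from $(i-1)$'s in row $i$, and the adjacent ``()'' pairs coming from vertical $i{-}1$/$i$ dominoes), then observe that neither affects the uncanceled ``(''. Your version is more explicit---you write out $S_i(M_b)$ as $(^{\beta_i})^{\alpha_{i-1}}(^{\beta_{i-1}}\cdots(^{\beta_1}$, verify the row-block structure of the tableau string, and treat injectivity separately---whereas the paper compresses all of this into two bullet points; but the argument is the same.
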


\begin{proof}
We must show that, for all $b \in \SSYT_n(\lambda)$ and all $i$,
$M_{e_i(b)} = e_i M_b$,
where on the left $e_i$ is calculated as in \S\ref{ss:yt}, and on the right as in \S\ref{ss:MS}. 
Let $S_i^{YT}(b)$ denote the string of brackets from \S\ref{ss:yt} and $S_i^{MS}(M_b)$ denote the string of brackets from \S\ref{ss:MS}. It is immediate from the definitions that these differ only in the following ways:
\begin{itemize}
\item  $S_i^{YT}(b)$ may have some ``)" corresponding to ``$i-1$" in row i, which are all at the left end of the string. These are not present in $S_i^{MS}(M_b)$.

\item $S_i^{MS}(M_b)$ may have canceling pairs of brackets corresponding to pairs of segments of the same length, say $[h,i]$ canceling $[h-1,i-1]$. It can happen that these correspond to an $i$ directly above an $i-1$ in $b$, in which case this pair is not present in $S_i^{YT}(b)$. 
\end{itemize}
Neither of these changes affect the uncanceled ``)." 
\end{proof}

\section{Crystal isomorphism from the PBW basis to multisegments} \label{sec:pbwtoms}
For this section, all PBW bases are with respect to the reduced expression
\begin{equation} \label{eq:re} {\bf i}= (1,2,\cdots, n,1,2 \cdots (n-1) \cdots 1,2,3,1,2,1).
\end{equation}
Recall that the corresponding order on positive roots is
$$\beta_1= \alpha_1, \beta_2= \alpha_1+\alpha_2, \ldots, \beta_{n}= \alpha_1 + \cdots+\alpha_n, \beta_{n+1}= \alpha_2, \ldots, \beta_N= \alpha_n.$$  
To each positive root $\beta= \alpha_i+ \cdots+ \alpha_j$ associate the segment $[\beta]=[i,j]$.

\subsection{The isomorphism}

\begin{Definition} \label{def:PBW-MS-isom}
Let $\Phi$ be the map from PBW bases to multisegments that takes\break $F_{\beta_1}^{(a_1)} \cdots F_{\beta_N}^{(a_N)}$ to the multisegment with $a_j$ copies of each $[\beta_j]$. 
\end{Definition}

\begin{Theorem} \label{thm:PBW-MS-isom}
$\Phi$ is an isomorphism of $\mathfrak{sl}_{n+1}$ crystals. 
\end{Theorem}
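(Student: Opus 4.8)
The plan is to identify $\Phi$ with the unique crystal isomorphism between the two realizations of $B(\infty)$. By the cited theorem of Saito the PBW index set realizes $B(\infty)$, and by Proposition~\ref{prop:MS-B(inf)} so does $MS_n$; since $B(\infty)$ is a connected highest weight crystal it has no nontrivial automorphisms, so there is a unique crystal isomorphism $\Psi$ between these two realizations. The map $\Phi$ is visibly a weight-preserving bijection: the weight of $F_{\beta_1}^{(a_1)}\cdots F_{\beta_N}^{(a_N)}$ is $-\sum_j a_j\beta_j$, and the segment $[\beta_j]=[i,j]$ contributes exactly one box of each color $i,\dots,j$, so $\wt\circ\Phi$ agrees with $\wt$; moreover it sends the trivial monomial to the empty multisegment. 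Hence it suffices to show that $\Phi$ intertwines every $e_i$ and $f_i$, which forces $\Phi=\Psi$. I would prove the intertwining by induction on $n$.

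First I would check the base case and the one easy operator. Because \eqref{eq:re} begins with $i_1=1$, Definition~\ref{def:PBW-crystal-ops} makes $f_1$ on the PBW side simply increment $a_1$, so $\Phi\circ f_1$ adjoins one copy of $[\beta_1]=[1,1]$. On the multisegment side $S_1(M)$ contains no ``)'' (there is no segment $[h,0]$), so $f_1$ likewise always adjoins an $[1,1]$; therefore $\Phi f_1=f_1\Phi$, and similarly for $e_1$ and $\varepsilon_1$ (indeed $\varepsilon_1(M)=M_{1,1}$). The case $n=1$ is exactly this.

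The engine of the induction is the rank-lowering operator $\pi:=\sigma_n\cdots\sigma_2\sigma_1$ (writing $\Phi_n$ for $\Phi$ and $\Phi_{n-1}$ for its rank $n-1$ analogue). Lemma~\ref{lem:braid} computes $\pi$ on the PBW side: it discards the first block $a_1,\dots,a_n$ of exponents (the segments $[1,j]$) and reindexes the surviving monomial by $[i,j]\mapsto[i-1,j-1]$ into the PBW monomial for $\mathfrak{sl}_n$ written in the analogous reduced expression one rank lower. The key step I would carry out is to compute the same composite on the multisegment side directly from Saito's formula $\sigma_i=(e_i^*)^{\mathrm{max}}f_i^{L}$ (Proposition~\ref{prop:Saito-formula}), using the explicit bracket rules for $e_i^*$ and $f_i$; I expect to find that on $MS_n$ the operator $\pi$ is the ``drop-and-shift'' map $\rho$ that deletes every segment $[1,j]$ and sends each remaining $[i,j]$ to $[i-1,j-1]$. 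Since $\Phi$ matches the root $[\beta_j]$ with the segment $[\beta_j]$ and the PBW reindexing is precisely the segment shift, this yields $\rho\circ\Phi_n=\Phi_{n-1}\circ\pi$, i.e.\ $\Phi$ intertwines the two incarnations of $\pi$ and transports the problem down to rank $n-1$.

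With this, the induction closes using the fact that the Lusztig datum is recovered intrinsically by $a_k=\varepsilon_{i_k}(\sigma_{i_{k-1}}\cdots\sigma_{i_1}(b))$, which is immediate from Definition~\ref{def:PBW-crystal-ops} and the definitions of $\tau$ and $\sigma_i$, and which makes sense on $MS_n$ because each $\sigma_i,\varepsilon_i$ is intrinsic crystal data. Thus $\Phi=\Psi$ reduces to the fixed-point identities $\varepsilon_{i_k}(\sigma^{MS}_{i_{k-1}}\cdots\sigma^{MS}_{i_1}(\Phi(b)))=a_k$: for indices $k>n$ these follow from the previous paragraph together with the induction hypothesis applied to $\rho(\Phi_n(b))=\Phi_{n-1}$ of the datum $(a_{n+1},\dots,a_N)$, while for $k\le n$ they amount to reading the block-$1$ multiplicities $M_{1,k}$ correctly off of $M=\Phi(b)$. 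I expect the two genuinely hard points to be the explicit evaluation of $\sigma_n\cdots\sigma_1$ on multisegments in the previous paragraph and, for $i\ge2$, the interaction between $f_i$ and the first block of segments $[1,j]$; both would be handled with the bracket bookkeeping already developed in Lemmas~\ref{lem:uri} and~\ref{lem:ccc2}, which track exactly how lengthening a segment changes the uncanceled brackets. The $*$-symmetry of Remark~\ref{remark:sym}, interchanging $f_i$ with $f^*_{n+1-i}$, can be used to cut the casework roughly in half.
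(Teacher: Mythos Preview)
Your proposal is essentially the paper's proof: induction on $n$, with the heart of the matter being the computation of $\sigma_n\cdots\sigma_1$ on the multisegment side (this is Proposition~\ref{prop:sigma} in the paper, and is indeed the longest and most technical part of the argument, established via Lemma~\ref{lem:ineq} and Corollary~\ref{cor:first-half}). The commuting square $\rho\circ\Phi_n=\Phi_{n-1}\circ\pi$ you describe is exactly equation~\eqref{ind-commute}, and your use of Lemma~\ref{lem:braid} on the PBW side matches the paper.

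The one place you work harder than necessary is the case $k\le n$. You propose to verify $\varepsilon_{i_k}(\sigma^{MS}_{i_{k-1}}\cdots\sigma^{MS}_{i_1}(\Phi(b)))=a_k$ by direct bracket computation, and rightly flag it as one of the ``genuinely hard points''; this would require understanding the partial composites $\sigma_{k-1}\cdots\sigma_1$ on multisegments, which is messier than the full composite. The paper sidesteps this entirely. Writing $c_j$ for the multiplicity of $[\beta_j]$ in the image of the unique isomorphism $\phi_n$, the $k>n$ argument already gives $c_i=a_i$ for $i>n$. Then the weight preservation you noted at the outset forces $\sum_{i\le n}a_i\beta_i=\sum_{i\le n}c_i\beta_i$, and since $\beta_1=\alpha_1,\ \beta_2=\alpha_1+\alpha_2,\ \dots,\ \beta_n=\alpha_1+\cdots+\alpha_n$ are linearly independent, $c_i=a_i$ for $i\le n$ follows immediately. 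So only the full composite $\sigma_n\cdots\sigma_1$ ever needs to be computed.
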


The proof of Theorem~\ref{thm:PBW-MS-isom} will occupy the rest of this section. The idea is to use Proposition~\ref{prop:Saito-formula}, which says $T_i^{-1} \circ \tau$ acts on $B(\infty)$ as realized using PBW bases in the same way as $(e_i^*)^{max}f_i^N$ for large $N$ (here $\tau$ means set the first exponent to 0). We consider 
$T_n^{-1}\tau \cdots T_1^{-1}\tau$ acting on PBW monomials and the corresponding crystal operators acting on multisegments, and show that these agree as they must if $\Phi$ is to be an isomorphism (see Proposition~\ref{prop:sigma} below). The image of this map is PBW monomials/multisegments for $\mathfrak{sl}_{n} \subset \mathfrak{sl}_{n+1}$, and we can then use induction on rank. 

The next section is fairly technical. We encourage the reader to look at  Proposition~\ref{prop:sigma} and the example in Section~\S\ref{ss:example} before continuing.

\subsection{Some technical lemmas}

\begin{Definition} \label{def:edef}
Given $M \in MS_n$, define a multisegment $M^{(k)}$ and integer $a_k$ for each $1\leq k \leq n$ inductively by
\begin{itemize}
\item $a_1= \varepsilon_1(M)+\varepsilon_1^*(M)+\langle\wt(M), \alpha_1^\vee \rangle,$

\item $M^{(1)} = f_1^{a_1}(M),$ 

\item $a_2= \varepsilon_2(M^{(1)})+\varepsilon_2^*(M^{(1)})+\langle\wt(M^{(1)}), \alpha_2^\vee \rangle,$

\item $M^{(2)} = f_2^{a_2}f_1^{a_1}(M),$ 

\end{itemize}
and so on.
\end{Definition}

\begin{Lemma} \label{lem:ineq} Fix $M \in MS_n$ and $i \leq k \leq n$. 

\begin{enumerate}

\item \label{ptg} $M_{i-1,k-1}^{(k)}= M_{i,k}$ (for $ i \geq 2$) and

\item \label{pty} $M_{i,k}^{(k)} \geq \max_{1 \leq s \leq n-k} \left\{ \sum\limits_{r=1}^s M_{i+1,k+r} - \sum\limits_{r=1}^{s-1} M_{i,k+r} \right\}.$ That is, 
 $M_{i,k}^{(k)}$ is at least the number of uncanceled ``)" in the substring of $S_i^*(M)$ consisting of those brackets that correspond to segments of length at least the length of $[i,k]$.
  
 \end{enumerate}
\end{Lemma}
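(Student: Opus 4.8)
The plan is to prove (i) and (ii) \emph{simultaneously by induction on $k$}, the point being that part~(ii) at stage $k-1$ supplies exactly the inequality that makes stage $k$ collapse. The first task is to make the block $f_k^{a_k}$ of Definition~\ref{def:edef} explicit. By Lemma~\ref{lem:bracket-count}, $a_k = ur_k(M^{(k-1)}) + ur_k^*(M^{(k-1)})$, and by Corollary~\ref{cor:KS-diag} this is the number of $f_k$'s needed to run $M^{(k-1)}$ down to the dashed line. The applications split into two phases: the first $ur_k(M^{(k-1)})$ of them consume the uncanceled ``)" of $S_k(M^{(k-1)})$, each lengthening some $[h,k-1]\to[h,k]$; once $ur_k=0$ every further $f_k$ adds a copy of $[k,k]$ (which contributes only a ``(" to $S_k$, so $ur_k$ stays $0$), whence the final $ur_k^*(M^{(k-1)})$ applications simply append that many copies of $[k,k]$.

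The second task is the structural observation that collapses the bracket count. Since $f_1,\dots,f_{k-1}$ only lengthen to tops $\le k-1$ and only add segments $[j,j]$ with $j\le k-1$, every segment of top $\ge k$ is untouched through stage $k-1$; in particular $M^{(k-1)}_{h,k}=M_{h,k}$ for all $h$. Reading $S_k(M^{(k-1)})$ by increasing height $h$, the height-$h$ block consists of $a_h:=M_{k-h+1,k}$ brackets ``(" (from $[k-h+1,k]$) followed by $b_h:=M^{(k-1)}_{k-h,k-1}$ brackets ``)" (from $[k-h,k-1]$), so at each height the ``)" all share the single bottom $k-h$. The inductive hypothesis (ii) at stage $k-1$, taken at $s=1$, reads $M^{(k-1)}_{k-h,k-1}\ge M_{k-h+1,k}$, i.e. $b_h\ge a_h$ for every $h$. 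This forces the cancellation: no ``(" survives, the number of uncanceled ``)" at height $h$ is exactly $U_h=b_h-a_h$, and phase one lengthens exactly $U_h$ of the $[k-h,k-1]$ segments. Hence $M^{(k)}_{k-h,k-1}=b_h-U_h=a_h=M_{k-h+1,k}$, which on setting $h=k-i+1$ is precisely part~(i).

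For part~(ii) the same accounting gives, for $i<k$, $M^{(k)}_{i,k}=M_{i,k}+U_{k-i}=M_{i,k}+M^{(k-1)}_{i,k-1}-M_{i+1,k}$. Feeding in (ii) at stage $k-1$ for $M^{(k-1)}_{i,k-1}$ and shifting the summation variable $s\mapsto s+1$ does the rest: the stage-$(k-1)$ summand at $s+1$ equals the stage-$k$ summand at $s$ plus $(M_{i+1,k}-M_{i,k})$, so the correction $M_{i,k}-M_{i+1,k}$ exactly reconstitutes the stage-$k$ maximand, giving the claimed lower bound. The boundary case $i=k$, where $M^{(k)}_{k,k}=M_{k,k}+ur_k^*(M^{(k-1)})$, and the base case $k=1$, where $M^{(1)}=M\cup\{ur_1^*(M)\text{ copies of }[1,1]\}$, are both handled directly: one writes $ur_k^*$ as the prefix-sum maximum over the heights of $S_k^*(M)$ (using again that segments of top $\ge k$ are unchanged), which is where the stated interpretation of the maximum as a count of uncanceled ``)" in $S_i^*(M)$ is verified.

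The step I expect to be the main obstacle is justifying the phase-one claim that exactly $U_h=b_h-a_h$ segments are lengthened at each height, since a single $f_k$ turns a ``)" at height $h$ (the segment $[k-h,k-1]$) into a ``(" at height $h+1$ (the segment $[k-h,k]$), physically moving a bracket between blocks. The resolution is that $f_k$ always acts on the \emph{rightmost} uncanceled ``)", so heights are processed from the top down; by the time a height-$h$ ``)" is converted, all higher blocks are already balanced, so the new ``(" at height $h+1$ merely joins that balanced block as an uncanceled ``(" and can never re-cancel a pending ``)". This keeps the per-height conversion counts equal to the $U_h$ read off the original string, and it is the only place where one must argue by hand rather than quote Lemma~\ref{lem:bracket-count} together with the inductive hypothesis.
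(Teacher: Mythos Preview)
Your proposal is correct and follows essentially the same approach as the paper's proof: induction on $k$, use Lemma~\ref{lem:bracket-count} to write $a_k=ur_k+ur_k^*$, invoke the inductive hypothesis~(ii) at $s=1$ to get $b_h\ge a_h$ at each height, deduce that the $f_k$-block converts exactly $b_h-a_h$ segments at each height (giving~(i)), and then push the resulting recursion $M^{(k)}_{i,k}=M_{i,k}+M^{(k-1)}_{i,k-1}-M_{i+1,k}$ through the inductive bound to get~(ii). Your final paragraph on why the per-height conversion counts really are $U_h$ is more careful than the paper, which simply asserts the outcome; this extra care is justified and does not change the argument.
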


\begin{proof} 
We proceed by induction on $k$. 
By Lemma~\ref{lem:bracket-count}, $a_1 = ur_1(M)+ur_1^*(M)$. So $f_1$ is applied to $M$ until there are no uncanceled ``)" in $S_1$, and then applied $ur_1^*(M)$ more times, creating $\young(1)$\;s and ``("s all the way to the left. This proves \eqref{pty} for $k=1$, and \eqref{ptg} is vacuous in this case.

Now assume the result holds for some $k$ and all $i \leq k$.
Look at $S_{k+1}(M^{(k)})$:

\[
\begin{tikzpicture}[scale=0.5]

\node at (0,.5){$\yywa{ $k\!\!+\!\!1$}$};
\node at (0,6){(};
\node at (1.4,0){$\dots$};
\node at (1.4,6){$\dots$};
\node at (2.8,.5){$\yywa{$k\!\!+\!\!1$}$};
\node at (2.8,6){(};
\node at (4.2,.5){$\yyta{$k$}$};
\node at (4.2,6){)};
\node at (5.4,0){$\dots$};
\node at (5.4,6){$\dots$};
\node at (6.6,.5){$\yyta{$k$}$};
\node at (6.6,6){)};
\node at (7.8,0){$\dots$};
\node at (7.8,6){$\dots$};
\node at (9.2,2){$\yywc{$i\!\!+\!\!1$}{$k\!\!+\!\!1$}$};
\node at (9.2,6){(};
\node at (10.6,0){$\dots$};
\node at (10.6,6){$\dots$};
\node at (12,2){$\yywc{$i\!\!+\!\!1$}{$k\!\!+\!\!1$}$};
\node at (12,6){(};
\node at (13.4,2){$\yytc{$i$}{$k$}$};
\node at (13.4,6){)};
\node at (14.6,0){$\dots$};
\node at (14.6,6){$\dots$};
\node at (15.8,2){$\yytc{$i$}{$k$}$};
\node at (15.8,6){)};
\node at (17,0){$\dots$};
\node at (17,6){$\dots$};
\node at (18.2,2.5){$\yywd{2}{$k\!\!+\!\!1$}$};
\node at (18.2,6){(};
\node at (19.6,0){$\dots$};
\node at (19.6,6){$\dots$};
\node at (21,2.5){$\yywd{2}{$k\!\!+\!\!1$}$};
\node at (21.2,6){(};
\node at (22.4,2.5){$\yytd{1}{$k$}$};
\node at (22.4,6){)};
\node at (23.6,0){$\dots$};
\node at (23.6,6){$\dots$};
\node at (24.8,2.5){$\yytd{1}{$k$}$};
\node at (24.8,6){)};
\node at (26.6,2.5){$\yywd{1}{$k\!\!+\!\!1$}$};
\node at (26.6,6){(};
\node at (27.8,0){$\dots$};
\node at (27.8,6){$\dots$};
\node at (29,2.5){$\yywd{1}{$k\!\!+\!\!1$}$};
\node at (29,6){(};

\node at (30,0) {.};

\end{tikzpicture}
\]

\noindent By the induction hypothesis, $M_{i,k}^{(k)} \geq M_{i+1,k+1}$ for all $i$.
Therefore all the ``(" over $[i+1, k+1]$ segments cancel ``)" over $[i,k]$ segments, and no other ``)" are canceled. 
Again using Lemma~\ref{lem:bracket-count}, $a_{k+1}= ur_{k+1}(M^{(k)})+  ur^*_{k+1}(M^{(k)})$, so applying $f_{k+1}^{a_{k+1}}$ changes all but $M_{i+1,k+1}^{(k)}$ of the $[i,k]$ segments to $[i,k+1]$ segments, establishing \eqref{ptg} for $k+1$. Furthermore, assuming  $i \leq k$, this creates exactly $M_{i,k}^{(k)} - M_{i+1,k+1}^{(k)}$ many $[i, k+1]$'s. Adding the original number of these segments,
\begin{align}
M_{i,k+1}^{(k+1)} &= M_{i,k+1} + M_{i,k}^{(k)} - M_{i+1,k+1} \nonumber \\
&\geq \max_{1 \leq s \leq n-k} \left\{ \sum\limits_{r=1}^s M_{i+1,k+r} - \sum\limits_{r=1}^{s-1} M_{i,k+r} \right\} - M_{i+1,k+1} + M_{i,k+1}  \label{ind} \\
&= \max_{1 \leq s \leq n-k} \left\{ \sum\limits_{r=2}^s M_{i+1,k+r} - \sum\limits_{r=2}^{s-1} M_{i,k+r} \right\}, \nonumber 
\end{align}
where \eqref{ind} holds by the induction hypothesis. After shifting indices this gives \eqref{pty} for $k+1$. Statement \eqref{pty} for $M_{k+1,k+1}^{(k+1)}$ holds as in the case $k=1$. 
\end{proof}

\begin{Corollary}\label{cor:first-half} $M^{(n)}$ is the multisegment obtained from $M$ by
 \begin{enumerate}
  \item \label{ewr1} removing all $[1, i]$ segments for each $i$,
 \item \label{ewr2} replacing each segment of the form $[i+1,k+1]$ for $i\geq 1$ by $[i, k]$, and
 \item \label{ewr3} adding some number of $[i,n]$ segments for each $i$.
 \end{enumerate}
\end{Corollary}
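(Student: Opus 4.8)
The plan is to deduce the Corollary directly from Lemma~\ref{lem:ineq} by understanding the final multisegment $M^{(n)}$ as the result of running the full sequence $f_n^{a_n} \cdots f_1^{a_1}$, and to track what happens to each segment type across all $n$ stages. First I would observe that statement~\eqref{ewr2} is essentially the content of Lemma~\ref{lem:ineq}\eqref{ptg}. Applying that identity iteratively, a segment $[i+1, k+1]$ with $i \geq 1$ present in the original $M$ becomes, after the relevant stages, an $[i,k]$ segment in $M^{(n)}$: the equation $M^{(k)}_{i-1,k-1} = M_{i,k}$ (re-indexed) records precisely that the multiplicity of $[i,k]$ in $M^{(k)}$ equals the original multiplicity of $[i+1,k+1]$ in $M$, and since $M^{(n)}$ agrees with $M^{(k)}$ on segments with top entry $k$ once $k$ has been processed (the operators $f_{k+1}, \ldots, f_n$ only act on segments with larger bottom endpoint), this shift persists to the end.

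Next I would address~\eqref{ewr1}. The point is that the $[1,i]$ segments are exactly the ones that get ``consumed'' by the shifting process: under the reindexing in Lemma~\ref{lem:ineq}\eqref{ptg}, a $[1,i]$ segment has no preimage of the form $[i+1,k+1]$ with $i \geq 1$ feeding into it, and instead the application of $f_i^{a_i}$ at the appropriate stage converts the $[1, i]$ segments into $[1, i+1]$ (i.e. pushes them up), so that after all $n$ stages every segment whose original bottom entry was $1$ has been pushed up to a segment $[1,\text{something}]$ that is then itself pushed, and ultimately none of the original $[1,i]$ segments survive as such. Concretely, I would argue that in passing from $M^{(k)}$ to $M^{(k+1)}$ the only $[1,k]$ segments not shifted are the $M^{(k)}_{2,k+1}$ many that are ``protected'' by the cancellation described in the proof of Lemma~\ref{lem:ineq}; but these protected ones are exactly the images of original $[2,k+1]$ segments, not genuine $[1,\cdot]$ segments, so every truly original $[1,i]$ segment is removed. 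This is the step I expect to require the most care, since it involves separating the segments with bottom entry $1$ from those with bottom entry $\geq 2$ and confirming the bookkeeping closes up correctly.

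Finally, for~\eqref{ewr3}, I would note that at each stage $k$ the quantity $a_k = ur_k(M^{(k-1)}) + ur_k^*(M^{(k-1)})$ (by Lemma~\ref{lem:bracket-count}) may exceed the number of brackets needed purely to shift existing segments; the excess applications of $f_k$ add fresh $[k,k]$ segments ``all the way to the left,'' exactly as in the base case $k=1$ of Lemma~\ref{lem:ineq}. These newly created segments, together with any that get pushed upward in later stages, are precisely the segments with bottom entry reaching the top index $n$, i.e. segments of the form $[i,n]$. Since the operators never lengthen a segment beyond top entry $n$ and the lower bound in Lemma~\ref{lem:ineq}\eqref{pty} guarantees $M^{(n)}_{i,n}$ can exceed what~\eqref{ewr1} and~\eqref{ewr2} alone would produce, the difference is accounted for entirely by added $[i,n]$ segments. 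Assembling these three observations — the shift from~\eqref{ptg}, the removal of original $[1,i]$ segments, and the additions at the top — yields the stated description of $M^{(n)}$.
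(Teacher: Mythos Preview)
Your overall strategy---derive everything from Lemma~\ref{lem:ineq}\eqref{ptg}---is the paper's strategy, but you are working much harder than necessary, and there are a couple of slips.

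First, the slips. After re-indexing, Lemma~\ref{lem:ineq}\eqref{ptg} says $M^{(k+1)}_{i,k}=M_{i+1,k+1}$, not $M^{(k)}_{i,k}$; and the reason this multiplicity is frozen thereafter is that $f_\ell$ for $\ell\geq k+2$ only touches segments with \emph{top} entry $\ell-1\geq k+1$, not ``larger bottom endpoint.'' In particular $f_{k+1}$ \emph{does} change the number of $[i,k]$ segments, so your parenthetical justification as written is wrong.

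Second, the overcomplication. Once you have $M^{(n)}_{i,k}=M_{i+1,k+1}$ for every $1\leq i\leq k<n$, parts~\eqref{ewr1} and~\eqref{ewr2} are already finished simultaneously: this single formula says the segments of $M^{(n)}$ with top $<n$ are exactly the down-shifts of the segments of $M$ with bottom $\geq 2$. There is no need to track the fate of individual $[1,j]$ segments or to argue about which ones are ``protected''; the multiplicity count alone does the job. (Your protected-segment narrative also conflates ``the count equals $M_{2,k+1}$'' with ``these are images of $[2,k+1]$ segments,'' which is not the same thing.) Likewise for~\eqref{ewr3}: since $M$ has no segments $[i+1,n+1]$, steps~\eqref{ewr1} and~\eqref{ewr2} leave zero $[i,n]$ segments, so~\eqref{ewr3} just asserts $M^{(n)}_{i,n}\geq 0$, which is trivial. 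The paper's entire proof is two sentences.
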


\begin{proof}
By Lemma~\ref{lem:ineq}\eqref{ptg} the number of segments $[i,k]$ for each $i \leq k<n$ is given by \eqref{ewr1} and \eqref{ewr2}, since applying $f_\ell$ for $\ell>k$ does not change the number of $[i,k]$ segments. 
Certainly $[i,n]$ segments can only be created, not destroyed, so \eqref{ewr3} holds as well. 
\end{proof}

\begin{Proposition}\label{prop:sigma} Fix a multisegment $M$. Then
$\sigma_{n} \dots \sigma_{2} \sigma_{1}(M)$ is the multisegment obtained by
\begin{enumerate}
\item Removing all $[1, i]$ segments
\item Shifting all remaining segments down by 1; i.e., for every $[i, j]$ in $M$, there is a $[i-1, j-1]$ in $\sigma_{n} \dots \sigma_{2} \sigma_{1}(M)$.

\end{enumerate}
\end{Proposition}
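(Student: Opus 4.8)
The plan is to combine Proposition~\ref{prop:Saito-formula} with the explicit computation already packaged in Lemma~\ref{lem:ineq} and Corollary~\ref{cor:first-half}. By Proposition~\ref{prop:Saito-formula}, for each $i$ the operator $\sigma_i$ acts on $M\in MS_n$ as $(e_i^*)^{\max}f_i^L$ for any sufficiently large $L$. Applying this definition successively, $\sigma_n\cdots\sigma_2\sigma_1(M)$ is computed by first running $f_1$ to saturation, then $(e_1^*)^{\max}$, then $f_2$ to saturation, then $(e_2^*)^{\max}$, and so on. The key observation is that the $f_i$-saturation part of this procedure is \emph{exactly} what Definition~\ref{def:edef} records: the multisegment $M^{(n)}=f_n^{a_n}\cdots f_1^{a_1}(M)$, where each $a_i$ is the maximal number of $f_i$'s (i.e.\ $\varepsilon_i+\varepsilon_i^*+\langle\wt,\alpha_i^\vee\rangle$ applications), is precisely the result of the ``forward'' half of the computation. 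So the structure of $M^{(n)}$ given by Corollary~\ref{cor:first-half} carries most of the content.

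First I would make precise the interleaving of the $f_i$ and $(e_i^*)^{\max}$ operators. The subtlety is that Proposition~\ref{prop:sigma} composes $\sigma_i$'s rather than separately doing all the $f$'s and then all the $e^*$'s, so I must argue that the $(e_i^*)^{\max}$ applied after each $\sigma_i$-step does not interfere with the later $f_{i+1},\dots,f_n$ saturations in a way that changes the count $a_{i+1},\dots,a_n$ from Definition~\ref{def:edef}. Here is where I would lean on the bicrystal structure: after applying $f_i^{a_i}$, the element sits on a ``dashed line'' in the Corollary~\ref{cor:KS-diag} picture for the index $i$, and $(e_i^*)^{\max}$ moves along that boundary. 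Because $j\neq i$, the commutation relation $f_jf_i^*=f_i^*f_j$ from Lemma~\ref{lem:ccc2} (and the analogous relation for $e_i^*$) lets me slide the $(e_i^*)^{\max}$ factors past the subsequent $f_j$-saturations without changing which segments those $f_j$ act on. This should reduce the computation of $\sigma_n\cdots\sigma_1(M)$ to: first compute $M^{(n)}$ via the $f$-saturations, then apply all the accumulated $(e_i^*)^{\max}$ operators.

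Next I would identify the effect of the trailing $(e_i^*)^{\max}$ operators on $M^{(n)}$. By Corollary~\ref{cor:first-half}, $M^{(n)}$ is obtained from $M$ by deleting all $[1,i]$ segments, shifting every $[i+1,k+1]$ (for $i\geq 1$) down to $[i,k]$, and adding some number of $[i,n]$ segments. The shifted segments already match the desired output of Proposition~\ref{prop:sigma}; the discrepancy is exactly the spurious $[i,n]$ segments introduced in Corollary~\ref{cor:first-half}\eqref{ewr3}. I expect that applying $(e_i^*)^{\max}$ for $i=1,\dots,n$ precisely strips off these extra top-row-reaching segments: each $e_i^*$ reverses an $f_i$ that was applied past the point of creating a genuine shifted segment. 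I would verify this by tracking, via Lemma~\ref{lem:uri} and the bracket bookkeeping of Lemma~\ref{lem:bracket-count}, that the number of $[i,n]$ segments removed by $(e_i^*)^{\max}$ equals the number added in the saturation step, so that the net result has no $[i,n]$ artifacts beyond those forced by shifting a genuine $[i+1,n+1]$, which cannot exist since $n+1$ exceeds the allowed range.

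The main obstacle I anticipate is the commutation/interleaving argument in the second step: justifying rigorously that the intermediate $(e_i^*)^{\max}$ applications can be postponed without altering the $f_j$-saturation counts $a_j$. A clean way to handle this is to invoke the symmetry of Remark~\ref{remark:sym} together with Proposition~\ref{prop:Saito-formula} to reinterpret each $\sigma_i$ as $\tau$ followed by $T_i^{-1}$ on the PBW side, where Lemma~\ref{lem:braid} gives an entirely explicit and manifestly compatible formula for $\sigma_n\cdots\sigma_1$ acting on PBW monomials. Comparing that explicit PBW-side formula with the multisegment-side computation via $\Phi$ would then let me sidestep the delicate commutation bookkeeping, reducing the proposition to matching two explicit answers. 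I expect the cleanest proof to run the multisegment computation directly but use the PBW/Lemma~\ref{lem:braid} picture as a consistency check and as the source of the precise value of the added $[i,n]$ multiplicities.
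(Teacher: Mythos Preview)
Your overall strategy matches the paper's: rewrite $\sigma_n\cdots\sigma_1(M)$ as $(e_n^*)^{\max}\cdots(e_1^*)^{\max}f_n^{a_n}\cdots f_1^{a_1}(M)$, apply Corollary~\ref{cor:first-half} to identify $M^{(n)}$, and then argue that the trailing $(e_i^*)^{\max}$'s strip off exactly the extra $[i,n]$ segments. Two points need correcting, however.

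First, you have the difficulty backwards. The interleaving you call ``the main obstacle'' is a one-line argument in the paper: from Proposition~\ref{prop:comb-characterization}\eqref{ccc1} and Definition~\ref{def:crystal}\eqref{cry3} one gets $(e_i^*)^{\max}f_j=f_j(e_i^*)^{\max}$ for $i\neq j$, so all the $(e_i^*)^{\max}$ factors slide to the left immediately and the $a_j$ are unchanged. The substantive work is the part you treat most vaguely, namely showing that each $(e_i^*)^{\max}$ converts every $[i,n]$ into $[i+1,n]$ and touches nothing else. The paper does this by an inductive bracket analysis in $S_i^*$: since the $[i,j]$'s with $j<n$ in $M^{(n)}$ arise from $[i+1,j+1]$'s in $M$ (Corollary~\ref{cor:first-half}), the ``)'' that canceled them in $S_i^*(M)$ have only moved rightward in $S_i^*(M^{(n)})$, so those ``('' remain canceled and only the $[i,n]$ segments contribute uncanceled ``(''. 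Your sketch via Lemmas~\ref{lem:uri} and~\ref{lem:bracket-count} does not yet supply this argument.

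Second, your proposed fallback---comparing with the PBW side via Lemma~\ref{lem:braid} and $\Phi$---is circular. Proposition~\ref{prop:sigma} is precisely what is used to prove that $\Phi$ is a crystal isomorphism (Theorem~\ref{thm:PBW-MS-isom}); until that theorem is established you cannot transport the PBW computation to multisegments. The proof must be done entirely inside $MS_n$.
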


\begin{proof}
By Proposition~\ref{prop:comb-characterization}\eqref{ccc1} and Definition~\ref{def:crystal}\eqref{cry3}, for $i\neq j$ we have $(e_i^*)^{max}f_j(M)=f_j(e_i^*)^{max}(M)$. Thus 
$$\sigma_{n} \dots \sigma_{2} \sigma_{1} (M)= (e_n^*)^{max} \dots (e_2^*)^{max}(e_1^*)^{max}f_n^{a_n} \dots f_2^{a_2}f_1^{a_1}(M), $$
where $a_i$ is as in Definition~\ref{def:edef}. By Corollary~\ref{cor:first-half}, $f_n^{a_n} \dots f_2^{a_2}f_1^{a_1}(M)$ is correct except that there may be extra segments $[i,n]$ for each $i$. 
It remains to show that $$(e_n^*)^{max} \dots (e_2^*)^{max}(e_1^*)^{max}$$ just deletes all the $[i,n]$.

We proceed by induction on $i$, proving that each $(e_i^*)^{max}$ changes all $[i, n]$ segments into $[i+1, n]$ segments and does nothing else. 
For $i=1$, we have
\[
\begin{tikzpicture}[scale=0.5]
\node at (-3,1.5){$S_1^*(M^{(n)}):$};
\node at (0,1.5){$\yyta{1}$};
\node at (0,0){(};
\node at (1.2,1){$\dots$};
\node at (1.2,0){$\dots$};
\node at (2.4,1.5){$\yyta{1}$};
\node at (2.4,0){(};
\node at (3.6,1.5){$\yyta{2}$};
\node at (3.6,0){)};
\node at (4.8,1){$\dots$};
\node at (4.8,0){$\dots$};
\node at (6,1.5){$\yyta{2}$};
\node at (6.1,0){)};
\node at (7.2,1){$\dots$};
\node at (7.2,0){$\dots$};
\node at (8.4,3){$\yytc{1}{$j$}$};
\node at (8.4,0){(};
\node at (9.6,1){$\dots$};
\node at (9.6,0){$\dots$};
\node at (10.8,3){$\yytc{1}{$j$}$};
\node at (10.8,0){(};
\node at (12.3,3){$\yywc{2}{$j\!\!+\!\!1$}$};
\node at (12.3,0){)};
\node at (13.7,1){$\dots$};
\node at (13.7,0){$\dots$};
\node at (15,3){$\yywc{2}{$j\!\!+\!\!1$}$};
\node at (15,0){)};
\node at (16.4,1){$\dots$};
\node at (16.4,0){$\dots$};
\node at (17.6,3.5){$\yytd{1}{$n$}$};
\node at (17.6,0){(};
\node at (18.8,1){$\dots$};
\node at (18.8,0){$\dots$};
\node at (20,3.5){$\yytd{1}{$n$}$};
\node at (20,0){(};
\end{tikzpicture}
\]

\noindent By Corollary~\ref{cor:first-half}, the number of $[1, j]$'s in $M^{(n)}$ is equal to the number of $[2, j+1]$'s in $M$. Thus the ``)'' in $S_1^*(M)$ would be enough to cancel all the $[1,j]$ in $S_1^*(M^{(n)})$. Application of the various $f_k^{a_k}$ can only have made these $[2,j+1]$ segments longer, and hence moved these ``)" to the right, but not past the ``(" coming from any $[1,n]$ segment. It follows that the ``(" corresponding to intervals $[1,j]$ for $j<n$ are in fact all canceled in $S^*_1(M^{(n)})$, so these segments are not affected by $(e_1^*)^{max}$. Certainly the $[1,n]$ intervals are all changed to $[2,n]$ by $(e_1^*)^{max}$, so the $i=1$ case has been established. 

Now fix $k \geq 2$ and assume that the claim holds for all $i<k$. 
By the induction hypothesis, the difference between $S_k^*((e_{k-1}^*)^{max} \dots (e_1^*)^{max}M^{(n)})$ and $S_k^*(M^{(n)})$ is only that the first string has some extra ``(" corresponding to $[k,n]$ segments. By the same argument as the base case, the uncanceled ``(" in $S_k^*(M^{(n)})$ are precisely those below $[k,n]$ segments, so this remains true for $S_k^*((e_{k-1}^*)^{max} \dots (e_1^*)^{max}M^{(n)})$. Hence $(e_k^*)^{max}$ just changes all $[k,n]$ segments to $[k+1,n]$'s. 

At the final step, $e_n^*$ clearly deletes all the $[n,n]$ segments. 
\end{proof}

\subsection{Proof of Theorem~\ref{thm:PBW-MS-isom}}

For each $n$ let $PBW_n$ denote the crystal of PBW monomials corresponding to the reduced expression \eqref{eq:re}. $PBW_n \simeq B(\infty) \simeq MS_n$ and $B(\infty)$ is connected, so there is a unique crystal isomorphism $\phi_n:PBW_n \to MS_n$ for each $n$. We need to show that $\phi_n=\Phi_n$. 
We proceed by induction on rank, the $\mathfrak{sl}_2$ case being trivial. So, fix $n \geq 2$ and assume $\phi_{n-1}=\Phi_{n-1}$. 

Let $\phi_n(F_1^{(a_1)} F_{\alpha_1+\alpha_2}^{(a_2)} \cdots F_n^{(a_N)}) = \left\lbrace \young(1)_{\; c_1} \cdots \young(n)_{\; c_N} \right\rbrace.$ We need to show that $c_i=a_i$ for all $i$. By Proposition~\ref{prop:Saito-formula},
\begin{equation}\label{ind-commute} 
\phi_n(T_n^{-1}\tau \cdots T_1^{-1}\tau(F_1^{(a_1)} \cdots F_n^{(a_N)})) = (\sigma_n \dots \sigma_1 \circ \phi_n)(F_1^{(a_1)} \cdots F_n^{(a_N)}),
\end{equation}
where on the right side $\sigma_i$ is calculated as $(e_i^*)^{max} f_i^N$ for large $N$. 

There is a natural copy of $PBW_{n-1} \subset PBW_n$; 
the monomials where the exponent of $F_{\alpha_j + \cdots +\alpha_n}$ is zero for all $j$. 
The image of $\phi_{n}|_{PBW_{n-1}}$ consists of exactly those multisegments with no segments of the form $[i,n]$ for any $i$, which is naturally identified with $MS_{n-1}$. Certainly
$\phi_n|_{PBW_{n-1}}$ is still a crystal isomorphism, so by induction is equal to $ \phi_{n-1}$. Thus the left side of \eqref{ind-commute} is:
\begin{align*}
\phi_n(T_n^{-1}\tau \cdots T_1^{-1}\tau(F_1^{(a_1)} \cdots{} &F_n^{(a_N)})) = \phi_n(F_1^{(a_{n+1)}} \cdots F_{n-1}^{(a_N)}) \text{ by Lemma~\ref{lem:braid}} \\
&= \phi_{n-1}(F_1^{(a_{n+1})} \cdots F_{n-1}^{(a_N)}) \\
&= \left\lbrace \yyta{1}_{\; a_{n+1}} \cdots \yywa{$n\!\!-\!\!1$}_{\; a_N} \right \rbrace \text{ by the induction hypothesis.}
\end{align*}

On the right side we have:
\begin{align*}
\sigma_n \dots \sigma_1(\phi(F_1^{(a_1)} \cdots F_n^{(a_N)})) &= \sigma_n \dots \sigma_1 \left( \left\lbrace \young(1)_{\; c_1} \cdots \young(n)_{\; c_N} \right\rbrace \right) \\
&= \left\lbrace \yyta{1}_{\; c_{n+1}} \cdots \yywa{$n\!\!-\!\!1$}_{\; c_N} \right\rbrace \text{ by Proposition~\ref{prop:sigma}}.
\end{align*}
Hence $c_i = a_i$ for $i>n$. Also
$$
\wt(F_1^{(a_1)} \cdots F_n^{(a_N)}) = \wt(\phi(F_1^{(a_1)} \cdots F_n^{(a_N)})) 
$$
implies
$$
-(a_1\beta_1+ \dots + a_N\beta_N) = -( c_1\beta_1+ \dots + c_n\beta_n+ a_{n+1}\beta_{n+1} + \dots + a_N\beta_N) ,
$$
from which it follows that
$$
a_1\beta_1 + \dots + a_n\beta_n = c_1\beta_1+ \dots + c_n\beta_n,
$$
since $a_i=c_i$ for all other $i$. But $\beta_1 = \alpha_1, \beta_2 = \alpha_1+\alpha_2, \dots, \beta_n = \alpha_1+ \dots + \alpha_n$ are linearly independent, so it follows that $c_i=a_i$ for $i \leq n$ as well. \qed

\subsection{An example}  \label{ss:example}
The main difficulty in proving Theorem~\ref{thm:PBW-MS-isom} is establishing Proposition~\ref{prop:sigma}.
As discussed in the proof of that proposition, 
\begin{equation}
\label{ewq}  \sigma_n \cdots \sigma_1 M= (e_n^*)^{max} \dots (e_2^*)^{max}(e_1^*)^{max}f_n^{a_n} \dots f_2^{a_2}f_1^{a_1}(M).
\end{equation}
Here we go through the reasons why the right hand side of \eqref{ewq} has the desired effect on
\[
\label{exp:sigma}
M= \young(1) \; \young(2) \; \young(2) \; \young(3) \; \young(2,1) \; \young(2,1) \; \young(3,2) \; \young(4,3) \; \young(4,3) \; \young(4,3,2) \; \young(5,4,3,2) \;.
\]
The strings of brackets for $i=1$ are:
\[
\begin{tikzpicture}[scale=0.58]
\node at (0.5,4.5){$S_1(M):$};
\node [anchor=south west] at (0,0){$\young(3)$};
\node [anchor=south west] at (1,0){$\young(2)$};
\node [anchor=south west] at (2,0){$\young(2)$};
\node [anchor=south west] at (3,0){$\young(1)$};
\node [anchor=south west] at (3.25,4){(};
\node [anchor=south west] at (4,0){$\young(4,3)$};
\node [anchor=south west] at (5,0){$\young(4,3)$};
\node [anchor=south west] at (6,0){$\young(3,2)$};
\node [anchor=south west] at (7,0){$\young(2,1)$};
\node [anchor=south west] at (8,0){$\young(2,1)$};
\node [anchor=south west] at (9,0){$\young(4,3,2)$};
\node [anchor=south west] at (10,0){$\young(5,4,3,2)$};
\end{tikzpicture}
\;\;\;
\begin{tikzpicture}[scale=0.58]
\node at (-1.3,0.7){$S_1^*(M):$};
\node [anchor=south west] at (0,1){$\young(1)$};
\node [anchor=south west] at (.25,0){(};
\node [anchor=south west] at (1,1){$\young(2)$};
\node [anchor=south west] at (1.25,0){)};
\node [anchor=south west] at (2,1){$\young(2)$};
\node [anchor=south west] at (2.25,0){)};
\node [anchor=south west] at (3,1){$\young(3)$};
\node [anchor=south west] at (4,1){$\young(2,1)$};
\node [anchor=south west] at (4.25,0){(};
\node [anchor=south west] at (5,1){$\young(2,1)$};
\node [anchor=south west] at (5.25,0){(};
\node [anchor=south west] at (6,1){$\young(3,2)$};
\node [anchor=south west] at (6.25,0){)};
\node [anchor=south west] at (7,1){$\young(4,3)$};
\node [anchor=south west] at (8,1){$\young(4,3)$};
\node [anchor=south west] at (9,1){$\young(4,3,2)$};
\node [anchor=south west] at (9.25,0){)};
\node [anchor=south west] at (10,1){$\young(5,4,3,2)$};
\node [anchor=south west] at (10.25,0){) \; .};
\draw[line width=0.01cm] (.3,.15)--(2,.95);
\draw[line width=0.01cm] (4.4,.3)--(10,.9);
\end{tikzpicture} 
\]
By counting uncanceled ``(" we see that $\varepsilon_1(M)=1$ and $\varepsilon_1^*(M)=0$. We can also calculate $\langle wt(M), \alpha_1^{\vee} \rangle=1$, giving $a_1=2.$ As in Lemma~\ref{lem:bracket-count}, this is the number of uncanceled ``)"  in $S_1(M)$ plus the number of uncanceled ``)"  in $S^*_1(M)$. Applying $f_1^2$ creates two new $\young(1)$. So at the next step we get:
\[
\mbox{} \hspace{-0.4cm}
\begin{tikzpicture}[scale=0.58]
\node at (-0.3,4.6){$S_2(M^{(1)}) \hspace{-0.1cm} :$};
\node [anchor=south west] at (0,0){$\young(3)$};
\node [anchor=south west] at (1,0){$\young(2)$};
\node [anchor=south west] at (1.25,4){(};
\node [anchor=south west] at (2,0){$\young(2)$};
\node [anchor=south west] at (2.25,4){(};
\node [anchor=south west] at (3,0){$\young(1)$};
\node [anchor=south west] at (3.25,4){)};
\node [anchor=south west] at (4,0){$\young(1)$};
\node [anchor=south west] at (4.25,4){)};
\node [anchor=south west] at (5,0){$\young(1)$};
\node [anchor=south west] at (5.25,4){)};
\node [anchor=south west] at (6,0){$\young(4,3)$};
\node [anchor=south west] at (7,0){$\young(4,3)$};
\node [anchor=south west] at (8,0){$\young(3,2)$};
\node [anchor=south west] at (9,0){$\young(2,1)$};
\node [anchor=south west] at (9.25,4){(};
\node [anchor=south west] at (10,0){$\young(2,1)$};
\node [anchor=south west] at (10.25,4){(};
\node [anchor=south west] at (11,0){$\young(4,3,2)$};
\node [anchor=south west] at (12,0){$\young(5,4,3,2)$};
\draw[line width=0.01cm] (1.4,4.3)--(5,4.90);
\end{tikzpicture}
\;
\begin{tikzpicture}[scale=0.58]
\node at (1.7,0.7){$S_2^*(M^{(1)})  \hspace{-0.1cm} :$};
\node [anchor=south west] at (0,1){$\young(1)$};
\node [anchor=south west] at (1,1){$\young(1)$};
\node [anchor=south west] at (2,1){$\young(1)$};
\node [anchor=south west] at (3,1){$\young(2)$};
\node [anchor=south west] at (3.25,0){(};
\node [anchor=south west] at (4,1){$\young(2)$};
\node [anchor=south west] at (4.25,0){(};
\node [anchor=south west] at (5,1){$\young(3)$};
\node [anchor=south west] at (5.25,0){)};
\node [anchor=south west] at (6,1){$\young(2,1)$};
\node [anchor=south west] at (7,1){$\young(2,1)$};
\node [anchor=south west] at (8,1){$\young(3,2)$};
\node [anchor=south west] at (8.25,0){(};
\node [anchor=south west] at (9,1){$\young(4,3)$};
\node [anchor=south west] at (9.25,0){)};
\node [anchor=south west] at (10,1){$\young(4,3)$};
\node [anchor=south west] at (10.25,0){)};
\node [anchor=south west] at (11,1){$\young(4,3,2)$};
\node [anchor=south west] at (11.25,0){(};
\node [anchor=south west] at (12,1){$\young(5,4,3,2)$};
\node [anchor=south west] at (12.25,0){( \hspace{0.01cm} .};
\draw[line width=0.01cm] (3.4,.3)--(11,.90);
\end{tikzpicture}
\]
We see that $a_2=1$, and $f_2^{a_2}$ just changes the uncanceled $\young(1)$ to a $\young(2,1) \;$.
Continuing we find 
$a_3=3$, $a_4=2$, and $a_5=4$. Application of the operators gives
\[
\begin{tikzpicture}[scale=0.58]
\node at (-1.5, 1){$M^{(5)}=$};
\node [anchor=south west] at (0,0){$\young(2)$};
\node [anchor=south west] at (1,0){$\young(1)$};
\node [anchor=south west] at (2,0){$\young(1)$};
\node [anchor=south west] at (3,0){$\young(3,2)$};
\node [anchor=south west] at (4,0){$\young(3,2)$};
\node [anchor=south west] at (5,0){$\young(2,1)$};
\node [anchor=south west] at (6,0){$\young(5,4,3)$};
\node [anchor=south west] at (7,0){$\young(5,4,3)$};
\node [anchor=south west] at (8,0){$\young(5,4,3)$};
\node [anchor=south west] at (9,0){$\young(3,2,1)$};
\node [anchor=south west] at (10,0){$\young(4,3,2,1)$};
\node [anchor=south west] at (11,0){$\young(5,4,3,2)$};
\node [anchor=south west] at (12,0){$\young(5,4,3,2)$ \; .};
\end{tikzpicture}
\]
Notice that the segments in $M^{(5)}$ that do not contain 5 correspond exactly to the segments of $M$ that do not contain 1, but shifted down by one. This is the content of Corollary~\ref{cor:first-half}.

Next we must apply the $(e_i^*)^\text{max}$. At the first step we have:
\[
\begin{tikzpicture}[scale=0.58]
\node [anchor=south west] at (-3.5, 0){$S_1^*(M^{(5)})$:};
\node [anchor=south west] at (0,1){$\young(1)$};
\node [anchor=south west] at (.25,0){(};
\node [anchor=south west] at (1,1){$\young(1)$};
\node [anchor=south west] at (1.25,0){(};
\node [anchor=south west] at (2,1){$\young(2)$};
\node [anchor=south west] at (2.25,0){)};
\node [anchor=south west] at (3,1){$\young(2,1)$};
\node [anchor=south west] at (3.25,0){(};
\node [anchor=south west] at (4,1){$\young(3,2)$};
\node [anchor=south west] at (4.25,0){)};
\node [anchor=south west] at (5,1){$\young(3,2)$};
\node [anchor=south west] at (5.25,0){)};
\node [anchor=south west] at (6,1){$\young(3,2,1)$};
\node [anchor=south west] at (6.25,0){(};
\node [anchor=south west] at (7,1){$\young(5,4,3)$};
\node [anchor=south west] at (8,1){$\young(5,4,3)$};
\node [anchor=south west] at (9,1){$\young(5,4,3)$};
\node [anchor=south west] at (10,1){$\young(4,3,2,1)$};
\node [anchor=south west] at (10.25,0){(};
\node [anchor=south west] at (11,1){$\young(5,4,3,2)$};
\node [anchor=south west] at (11.25,0){)};
\node [anchor=south west] at (12,1){$\young(5,4,3,2)$};
\node [anchor=south west] at (12.25,0){) \;.};
\draw[line width=0.01cm] (.35,.35)--(12.8,.8);
\end{tikzpicture}
\]
There are no uncanceled ``(" so $\varepsilon_1^*=0$ and $(e_1^*)^{max}$ does nothing. Next,
\[
\begin{tikzpicture}[scale=0.58]
\node [anchor=south west] at (-5, 0){$S_2^*((e_1^*)^0 M^{(5)})$:};
\node [anchor=south west] at (0,1){$\young(1)$};
\node [anchor=south west] at (1,1){$\young(1)$};
\node [anchor=south west] at (2,1){$\young(2)$};
\node [anchor=south west] at (2.25,0){(};
\node [anchor=south west] at (3,1){$\young(2,1)$};
\node [anchor=south west] at (4,1){$\young(3,2)$};
\node [anchor=south west] at (4.25,0){(};
\node [anchor=south west] at (5,1){$\young(3,2)$};
\node [anchor=south west] at (5.25,0){(};
\node [anchor=south west] at (6,1){$\young(3,2,1)$};
\node [anchor=south west] at (7,1){$\young(5,4,3)$};
\node [anchor=south west] at (7.25,0){)};
\node [anchor=south west] at (8,1){$\young(5,4,3)$};
\node [anchor=south west] at (8.25,0){)};
\node [anchor=south west] at (9,1){$\young(5,4,3)$};
\node [anchor=south west] at (9.25,0){)};
\node [anchor=south west] at (10,1){$\young(4,3,2,1)$};
\node [anchor=south west] at (11,1){$\young(5,4,3,2)$};
\node [anchor=south west] at (11.25,0){(};
\node [anchor=south west] at (12,1){$\young(5,4,3,2)$};
\node [anchor=south west] at (12.25,0){( \; .};
\draw[line width=0.01cm] (2.35,.35)--(9.8,.8);
\end{tikzpicture}
\]
So $e_2^*$ is applied twice, deleting the $2$s at the bottom of the right-most segments. Notice that all ``(" corresponding to segments that do not contain a 5 are canceled; as discussed in the proof of Proposition~\ref{prop:sigma} this will always be the case, so no segments are changed except those containing $5$s. Certainly segments containing 5s always do change, and at the final step are deleted.
 We end up with
\[
\begin{tikzpicture}[scale=0.58]
\node [anchor=south west] at (-18.5, 0){$(e_5^*)^\text{max}(e_4^*)^\text{max}(e_3^*)^\text{max}(e_2^*)^\text{max}(e_1^*)^\text{max} f_5^{a_5}f_4^{a_4}f_3^{a_3} f_2^{a_2} f_1^{a_1} (M)=$};
\node [anchor=south west] at (0,0){$\young(1)$};
\node [anchor=south west] at (1,0){$\young(1)$};
\node [anchor=south west] at (2,0){$\young(2)$};
\node [anchor=south west] at (3,0){$\young(2,1)$};
\node [anchor=south west] at (4,0){$\young(3,2)$};
\node [anchor=south west] at (5,0){$\young(3,2)$};
\node [anchor=south west] at (6,0){$\young(3,2,1)$};
\node [anchor=south west] at (7,0){$\young(4,3,2,1)$};
\node [anchor=south west] at (8,0){$,$};
\end{tikzpicture}
\]
as predicted by Proposition~\ref{prop:sigma}.

\end{document}